\def\acts{\mathrel{\reflectbox{$\righttoleftarrow$}}}
\newtheorem{fact}{Fact}[section]   
\newtheorem{theorem}[fact]{Theorem}
\newtheorem{proposition}[fact]{Proposition}
\newtheorem{remark}[fact]{Remark}
\newtheorem{definition}[fact]{Definition}
\newtheorem{example}[fact]{Example}
\newtheorem{conjecture}[fact]{Conjecture}
\newtheorem{assumption}[fact]{Assumption}
\DeclareMathOperator{\Q}{\mathbb Q}
\DeclareMathOperator{\C}{\mathbb C}
\DeclareMathOperator{\Z}{\mathbb Z}
\DeclareMathOperator\LCn{\Lambda\!^2\!\C^n} \DeclareMathOperator\LCm{\Lambda\!^2\!\C^m}
\DeclareMathOperator\SCn{S^2\!\C^n}
\DeclareMathOperator\GL{GL}
\DeclareMathOperator\Sp{Sp}
\DeclareMathOperator\codim{codim}
\DeclareMathOperator\One{\hbox{\bf 1}}
\DeclareMathOperator\csm{c^{sm}}
\DeclareMathOperator\ssm{s^{sm}}
\DeclareMathOperator\ssmz{s^{sm}_0}
\DeclareMathOperator\csmz{c^{sm}_0}
\DeclareMathOperator\cM{c^{M}}
\DeclareMathOperator\st{\tilde{s}}
\DeclareMathOperator\mC{mC}
\DeclareMathOperator\mS{mS}
\DeclareMathOperator\Gr{Gr}
\DeclareMathOperator\Eu{Eu}
\DeclareMathOperator\Ann{Ann}
\DeclareMathOperator\spa{span}
\DeclareMathOperator\PP{\mathbb P}
\DeclareMathOperator\PS{\mathbb P \Sigma}
\DeclareMathOperator\PV{\mathbb P V}
\DeclareMathOperator{\J}{\mathcal J}
\DeclareMathOperator{\Hom}{Hom}
\title{Characteristic Classes of Symmetric and Skew-symmetric degeneracy loci}
\author{Sutipoj Promtapan}
\address{Department of Mathematics, University of North Carolina at Chapel Hill, USA}
\email{spromtapan@gmail.com}
\author{Rich\'ard Rim\'anyi}
\address{Department of Mathematics, University of North Carolina at Chapel Hill, USA}
\email{rimanyi@email.unc.edu}
\begin{document}

\maketitle

\begin{abstract}
We give two formulas for the Chern-Schwartz-MacPherson class of symmetric and skew-symmetric degeneracy loci. We apply them in enumerative geometry, explore their algebraic combinatorics, and discuss K theory generalizations.  
\end{abstract}

\section{Introduction} Degeneracy loci formulas are universal expressions for the characteristic classes of certain degeneracy loci. The two most widely used such formulas are 
\begin{itemize}
\item the Giambelli-Thom-Porteous formula \cite{Po}
\begin{equation*}\label{eq1}
[\overline{\Sigma}_r]=s_{(r+l)^r},
\end{equation*}
and 
\item formulas of J\'ozefiak-Lascoux-Pragacz and Harris-Tu \cite{JLP, HT, FR1,AF}
\begin{equation*}\label{eq2}
[\overline{\Sigma}^\wedge_r]=s_{r-1,r-2,\ldots,2,1}, \qquad [\overline{\Sigma}^S_r]=2^{r-1}s_{r,r-1,\ldots,2,1}.
\end{equation*}
\end{itemize}
Some explanations are in order. 

\subsection{Degeneracy loci interpretation} First we explain the two formulas above in the language of ``degeneracy loci''. Let $\psi:A^n\to B^{n+l}$ ($l\geq 0$) be a vector bundle map over the base space $M$, and let $\Sigma_r$ be the set of points $x$ in $M$ over which $\psi_x$ has rank $n-r$ (that is, corank $r$). Then under suitable assumption on $M$ and transversality assumption on $\psi$, the above Giambelli-Thom-Porteous formula holds for the {\em fundamental cohomology class} $[\overline{\Sigma}_r]\in H^*(M)$ of $\overline{\Sigma}_r$, where $s_{\lambda_1,\ldots,\lambda_k}=\det(c_{\lambda_i+j-i})_{i,j=1,\ldots,k}$ and $c_i$ is defined by 
\begin{equation}\label{quotient}
1+c_1t+c_2t^2+\ldots=\frac{1+c_1(B)t+c_2(B)t^2+\ldots}{1+c_1(A)t+c_2(A)t^2+\ldots}.
\end{equation}

Now let $A$ be a rank $n$ vector bundle, and $\psi:A^*\to A$ be a {\em skew-symmetric} or {\em symmetric} vector bundle map over the base space $M$, and let $\Sigma^r$ be the set of points $x$ in $M$ over which $\psi_x$ has corank $r$ (in the skew-symmetric case $n-r$ is necessarily even). Then, under suitable assumption on $M$ and transversality assumption on $\psi$, the J\'ozefiak-Lascoux-Pragacz-Harris-Tu formulas hold, where $s_\lambda$ is the same as above, with $c_i$ the $i$th Chern class of $A$. 

\subsection{Equivariant cohomology interpretation}
Consider the $G=\GL_n(\C)\times \GL_{n+l}(\C)$ action on $\Hom(\C^n,\C^{n+l})$ by $(A,B)\cdot X=BXA^{-1}$, and let $\Sigma_r$ be the subset in $\Hom(\C^n,\C^{n+l})$ of matrices of corank $r$. Then the Giambelli-Thom-Porteous formula holds for the equivariant fundamental class of $[\overline{\Sigma}_r]$ in $H^*(BG)$. The classes $c_i$ are as in \eqref{quotient} where $a_i$ and $b_i$ are the Chern classes of the tautological rank $n$ and rank $n+l$ vector bundles over $BG$. 

Similarly, consider the $G=\GL_n(\C)$ action on the set of skew-symmetric or symmetric $n\times n$ matrices by $A\cdot X=A^TXA$ and let $\Sigma^\wedge_r, \Sigma^S_r$ be the set of those of corank $r$. Then  for the {\em $G$-equivariant fundamental classes} $[\overline{\Sigma}^\wedge_r], [\overline{\Sigma}^S_r]$  the J\'ozefiak-Lascoux-Pragacz-Harris-Tu formulas hold in $H^*(BG)$, where $c_i$ is the $i$'th Chern class of the tautological bundle over $BG$. 

\subsection{MacPherson deformation of the fundamental class}
The notion of fundamental class has an inhomogeneous deformation, called Chern-Schwartz-MacPherson class (CSM), denoted 
\[\csm(\Sigma)=\csm(\Sigma\subset M)=[\Sigma]+\text{higher order terms}.\]
The CSM class encodes more geometric and enumerative properties of the singular variety $\Sigma$ than its lowest degree term, the fundamental class. It is also related with symplectic topology and representation theory (through Maulik-Okounkov's notion of ``stable envelope classes'' \cite{MO}), at least in Schubert calculus settings, %and for Nakajima quiver varieties 
see \cite{RV, FR2, AMSS1}.

The CSM version of the Giambelli-Thom-Porteous formula is calculated in \cite{PP}, see also \cite{FR2, Z}. To give a sample of that result we introduce the Segre-Schwartz-MacPherson class (SSM): $\ssm(\Sigma\subset M)=\csm(\Sigma\subset M)/c(TM)$. This carries the same information as the CSM class, but certain theorems are phrased more elegantly for SSM classes. We have 
\[
\ssm(\Sigma_0\subset \Hom(\C^n,\C^{n+1}))=s_0-s_2+(2s_3+s_{21})+(-3s_4-3s_{31}-s_{211})+\ldots
\]
(Those looking for positivity properties of such expansions may find this formula disappointing, but luckily positivity can be saved, see \cite[Section 1.5]{FR2}.) 

The goal of this paper is to calculate the CSM (or equivalently, the SSM) deformations of the J\'ozefiak-Lascoux-Pragacz-Harris-Tu formulas. The reader is invited to jump ahead and see sample results in Section \ref{algcomb}. 

\subsection{Plan of the paper}
After introducing our geometric settings (the $\LCn$ and $\SCn$ representations) in Section \ref{sec:TheReps}, we recall the notion of Chern-Schwartz-MacPherson class in Section~\ref{sec:CSM}. In particular, first we recall the traditional approach to CSM classes via resolutions, and push-forward, and then we recall the recent development, triggered by Maulik-Okounkov's notion of stable envelopes, claiming that CSM classes are the unique solutions to some interpolation problems. 

We follow the traditional approach in Section \ref{sec:sieveform}, and we follow the interpolation approach in Section \ref{sec:intform}. Both yield to formulas for CSM classes of the orbits of $\LCn$ and $\SCn$. The fact that the formulas obtained in the two approaches are equal is not obvious algebraically from their form. The one obtained from interpolation seems better: it is ``one summation shorter'', also, the other one is an exclusion-inclusion formula (sum of terms with alternating signs), hence it is not obviously suitable for further combinatorial study. 

In Section \ref{algcomb} we make the first steps towards the algebraic combinatorics of the obtained formulas. We discuss stability, normalization, and most importantly positivity properties. The positivity properties can be studied for Schur expansions, or for the more conceptual expansions in terms of $\st_\lambda$ functions. 

In Section \ref{appl} we show sample applications in geometry of the calculated CSM classes. Namely, we focus on the most direct consequences, the Euler characteristics of general linear sections of symmetric and skew-symmetric degeneracy loci. 

Finally, in Section \ref{future} we discuss two natural directions for future study. First we give sample results about the closely related Chern-Mather classes of symmetric and skew-symmetric degeneracy loci. Then we explore the natural K theory analogue of CSM class, the so-called {\em motivic Chern class}. The traditional approach to motivic Chern classes is similar to the traditional approach to CSM classes (roughly speaking, replace the notion of Euler characteristic with that of chi-$y$-genus). Hence the K theory analogs of the results in Section  \ref{sec:sieveform} are promising. However, the interpolation  approach to motivic Chern classes is more sophisticated \cite{FRW2}, hence finding analogs of the results in Section \ref{sec:intform} remains a challenge.

\bigskip

\noindent{\bf Acknowledgment.} The first author was supported by the he Development and Promotion of Science and Technology Talents Project (Royal Government of Thailand scholarship) during his doctoral studies at UNC Chapel Hill. The second author is supported by a Simons Foundation grant. 

\bigskip

\noindent{\bf Notation.} Denote $[n]=\{1,\ldots,n\}$. The set of $r$-element subsets of $[n]$ will be denoted by $\binom{[n]}{r}$. For $I\in \binom{[n]}{r}$ let $\bar{I}=[n]-I$. Varieties are considered over the complex numbers, and cohomology is meant with rational coefficients.

\section{The representations $\LCn$, $\SCn$}\label{sec:TheReps}

Consider the action of $\GL_n(\C)$ on the vector space of skew-symmetric $n\times n$ matrices, and on the 
vector space of symmetric $n\times n$ matrices, by $A\cdot X=A^T X A$. These representations will be denoted by $\LCn$ and $\SCn$ respectively. The orbits of both of these representations are determined by rank. 
\begin{itemize}
\item For $0\leq r\leq n$, $n-r$ even, the orbit of rank $n-r$ (``corank $r$'') matrices in $\LCn$ will be denoted by $\Sigma^\wedge_{n,r}$. For example $X_{n,r}^\wedge=\underbrace{H\oplus \ldots \oplus H}_{(n-r)/2} \oplus \underbrace{0 \oplus \ldots 0}_r\in \Sigma_{n,r}^\wedge$, where $H=\begin{pmatrix}0 & 1 \\ -1 & 0\end{pmatrix}$.  We have $\codim (\Sigma_{n,r}^\wedge\subset \LCn)=\binom{r}{2}$.
\item For $0\leq r\leq n$, the orbit of rank $n-r$ (``corank $r$'') matrices in $\SCn$ will be denoted by $\Sigma_{n,r}^S$. For example $X_{n,r}^S=\underbrace{1\oplus \ldots \oplus 1}_{n-r} \oplus \underbrace{0 \oplus \ldots 0}_r\in \Sigma_{n,r}^S$. We have $\codim (\Sigma_{n,r}^S\subset \SCn)=\binom{r+1}{2}$.
\end{itemize}
In later sections we will approach the geometric study of these orbits by constructing resolutions of their closures, and by studying their stabilizer groups.

\section{Chern-Schwartz-MacPherson classes}\label{sec:CSM}

Deligne and Grothendieck conjectured \cite{Sullivan71} and MacPherson proved \cite{MacPherson1974} the existence of a unique natural transformation $C_*: \mathcal{F}(-) \rightarrow H_*(-)$ from the covariant functor of constructible functions to the covariant functor of Borel-Moore homology, satisfying certain properties. Independently, Schwartz \cite{Schwartz1965} introduced the notion of `obstruction class' (for the extension of stratified radial vector frames over a complex algebraic variety), and later Brasselet and Schwartz \cite{Brasselet-Schwartz1981} proved that $C_*$ and the obstruction class essentially coincide, via Alexander duality. We will study the {\em equivariant} {\em co}homology version of the resulting Chern-Schwartz-MacPherson (CSM) class, due to Ohmoto \cite{Ohmoto2006, Ohmoto2012, Ohmoto2016}.

\subsection{Equivariant CSM class, after MacPherson, Ohmoto}\label{sec:eqCSM}

Let $G$ be an algebraic group acting on the smooth algebraic variety $M$, and let $f$ be a $G$-invariant constructible function $f$ on $M$ (say, to $\Z$). The associated {\em $G$-equivariant Chern-Schwartz-MacPherson class} $\csm(f)$ is an element of $H^*_G(M)=H^*_G(M,\Q)$. 

Before further discussing this notion let us consider a version of it, the $G$-equivariant Segre-Schwartz-MacPherson (SSM) class $\ssm(f)=\csm(f)/c(TM)\in H^{**}_G(M)$.\footnote{Since we divided by the equivariant total Chern class ``$c(TM)=1+$higher order terms'', the SSM class may be non-zero in arbitrarily high degrees: it lives in the completion, as indicated. In the rest of the paper we will not indicate this completion, and write $\csm(f), \ssm(f)\in H^*_G(M)$.} Also, for an invariant (not necessarily closed) subvariety $\Sigma\subset M$ denote $\csm(\Sigma)=\csm(\Sigma\subset M)=\csm(\One_\Sigma)\in H^*_G(M)$, and $\ssm(\Sigma)=\ssm(\Sigma\subset M)=\ssm(\One_\Sigma)\in H^*_G(M)$, where $\One_\Sigma$ is the indicator function of $\Sigma$.

We will sketch Ohmoto's definition below in Remark \ref{OhmotosDef}. For our purposes the following (defining) properties will be sufficient.

\begin{enumerate}[(i)]
\item\label{csm property 1} (additivity) For equivariant constructible functions $f$ and $g$ on $M$ we have 
\[
\csm(f+g) = \csm(f) + \csm(g)\qquad \text{and}\qquad \csm(\lambda\cdot f) = \lambda\cdot \csm(f) \text{ for } \lambda\in \Z.
\]
\item\label{csm property 2} (normalization) For an equivariant proper embedding of a {\em smooth} subvariety $i:\Sigma \subset M$ we have 
\[
\csm(\Sigma \subset M) = i_* c(T\Sigma) \in H^*_G(M).
\]
\item\label{csm property 3} (functoriality) For a $G$-equivariant proper map between smooth $G$-varieties $\eta : Y \rightarrow M$,
\[
\eta_*(c(TY)) = \sum_j j \cdot \csm(M_j)
\]
where $M_j = \{ x \in M : \chi (\eta^{-1}(x)) = j \}$.
\end{enumerate}
The named three properties uniquely define the CSM class (the uniqueness is obvious, the existence is the content of the arguments of MacPherson, Brasselet-Schwartz, Ohmoto). The CSM class, however, satisfies another key property \cite[Theorem 4.2]{Ohmoto2006}, \cite[Proposition 3.8]{Ohmoto2016}:
\begin{enumerate}[(i)]
\item[(iv)]\label{csm property 4} Let $\Sigma \subset M$ be a closed invariant subvariety with an invariant Whitney stratification. For an equivariant map between smooth manifolds $\eta : Y \rightarrow M$ that is transversal to the strata of $\Sigma$, we have
\[
\ssm(\eta^{-1}(\Sigma)) = \eta^*(\ssm(\Sigma)).
\]
\end{enumerate}

\begin{remark}\rm
The most natural characteristic class, the fundamental class $$[\Sigma \subset M] \in H^{\codim(\Sigma\subset M)}(M)$$ of a closed subvariety $\Sigma\subset M$ behaves nicely with respect to both push-forward and pull-back. The CSM ``deformation'' of the notion of fundamental class is forced to behave nicely with respect to push-forward (see axiom \eqref{csm property 3}). Yet, it is rather remarkable that it remains well-behaving with respect to pull-back as well (see property (iv)).
\end{remark}

We called the CSM  class a `deformation' of the fundamental class because we also have \cite[Section 4.1]{Ohmoto2006}:
\begin{enumerate}[(i)]
\item[(v)]\label{csm property 5} For a subvariety $\Sigma\subset M$ the lowest degree term of $\csm(\Sigma\subset M)$ is $[\Sigma\subset M]$. Terms of degree higher than $\dim M$ are 0. The integral of (the term of degree $\dim M$ of) $\csm(\Sigma \subset M)$ is the topological Euler characteristic of $\Sigma$.
\end{enumerate}

\begin{remark}\rm \label{OhmotosDef} We have set up the CSM classes in cohomology, but their natural habitat is homology. Following MacPherson, Ohmoto defines them by first proving the existence and uniqueness of a natural transformation $C_*^G : \mathcal{F}^G(-) \rightarrow H_*^G(-)$ from the abelian group of $G$-equivariant constructible functions to the $G$-equivariant homology (some non-totally-trivial definitions are needed to make this work!), satisfying axioms analogous to \eqref{csm property 1}--\eqref{csm property 3} above. Then the cohomology version considered in this paper is obtained by composing $C_*^G$ with homology push-forward to the ambient space, and Poincar\'e-duality in the smooth ambient space.
\end{remark}

\subsection{Interpolation characterization of CSM classes}
Under certain circumstances, equivariant CSM classes are also determined by a set of interpolation properties \cite{RV, FR2}.

Consider a linear representation of the algebraic group $G$ on the vector space $V$. For an orbit $\Sigma$, and $x \in \Sigma$ let $G_{\Sigma}\leq G$ be the stabilizer subgroup of $x$. Let $T_{\Sigma} = T_x\Sigma$ be the tangent space of $\Sigma$ at $x$, and $N_{\Sigma} = T_xV / T_{\Sigma}$, both $G_\Sigma$-representations (these  definitions don't depend on the choice of $x$ in $\Sigma$ up to natural isomorphisms). We will use the $G_\Sigma$ equivariant Euler and total Chern classes of these representations. Note that the inclusion $G_{\Sigma} \leq G$ induces a map $\phi_{\Sigma} : H^*(BG) \rightarrow H^*(BG_{\Sigma})$---which is also independent of the choice of $x\in \Sigma$.

\begin{assumption} \label{assume}
We assume that the representation $G\acts V$ has finitely many orbits, the orbits are cones, and that the Euler class $e(N_{\Sigma})\not=0$ for all $\Sigma$.
\end{assumption}

\begin{theorem}[\cite{FR2}]\label{Interpolation axiom}
Under Assumption \ref{assume} the CSM class of the orbit $\Sigma$ is uniquely determined by the conditions
\begin{enumerate}
\item\label{axiom 1} $\phi_{\Sigma}(\csm(\Sigma)) = c(T_{\Sigma})e(N_{\Sigma})$ in $H^*(BG_{\Sigma})$;
\item\label{axiom 2} for any orbit $\Omega$, $c(T_{\Omega})$ divides $\phi_{\Omega}(\csm(\Sigma))$ in $H^{*}(BG_{\Omega})$;
\item\label{axiom 3} for any orbit $\Omega \neq \Sigma$, $\deg(\phi_{\Omega}(\csm(\Sigma))) < \deg(c(T_{\Omega})e(N_{\Omega}))$.
\end{enumerate}
\end{theorem}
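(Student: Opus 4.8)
The plan is to establish the theorem in two halves: first, that the genuine class $\csm(\Sigma)$ satisfies conditions \eqref{axiom 1}--\eqref{axiom 3}, and second, that these conditions have at most one solution in $H^*(BG)$. The second half is the crux, and it reduces to the following \emph{interpolation lemma}: under Assumption~\ref{assume}, any $d\in H^*(BG)$ with $c(T_\Omega)\mid\phi_\Omega(d)$ and $\deg\phi_\Omega(d)<\deg\bigl(c(T_\Omega)e(N_\Omega)\bigr)$ for every orbit $\Omega$ must vanish. Granting the lemma, uniqueness follows by applying it to the difference $d=c-c'$ of two solutions: subtracting \eqref{axiom 1} gives $\phi_\Sigma(d)=0$, so $d$ meets the lemma's hypotheses at $\Sigma$ trivially and at every other $\Omega$ by \eqref{axiom 2}--\eqref{axiom 3}.

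For the existence half I would first identify $\phi_\Omega$ geometrically: as $\Omega\cong G/G_\Omega$, the restriction $H^*_G(V)=H^*(BG)\to H^*_G(\Omega)=H^*(BG_\Omega)$ \emph{is} $\phi_\Omega$. Condition \eqref{axiom 1} is then the self-intersection formula: near $\Sigma$ the indicator $\One_\Sigma$ is that of the smooth locally closed subvariety $\Sigma$, so the normalization property gives $\csm(\Sigma)=i_*c(T\Sigma)$ there, and $i^*i_*(-)=(-)\,e(N_\Sigma)$ yields $\phi_\Sigma(\csm(\Sigma))=c(T_\Sigma)e(N_\Sigma)$. For \eqref{axiom 2} and \eqref{axiom 3} I would take a $G_\Omega$-slice $S$ transverse to $\Omega$ at a point; the inclusion $S\hookrightarrow V$ is transverse to all strata, so property (iv) computes the restriction of $\ssm(\Sigma)$ along $S$ as $\ssm(\Sigma\cap S\subset S)$, while the product structure $V\cong G\times_{G_\Omega}S$ near $\Omega$ splits off the tangent-to-orbit directions and contributes the factor $c(T_\Omega)$. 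This gives the divisibility \eqref{axiom 2}. The degree bound \eqref{axiom 3} is exactly where the cone hypothesis of Assumption~\ref{assume} is needed: the top-degree (Euler characteristic) term of the slice contribution vanishes for $\Omega\ne\Sigma$ because the cone structure of the orbits forces the relevant Euler characteristics to vanish, pushing $\deg\phi_\Omega(\csm(\Sigma))$ strictly below $2\dim_\C V=\deg\bigl(c(T_\Omega)e(N_\Omega)\bigr)$.

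To prove the interpolation lemma I would peel off the orbits from largest to smallest. Suppose $\phi_{\Omega'}(d)=0$ for all orbits $\Omega'$ strictly larger than a given $\Omega$; then $d$ is supported on the closed set obtained by removing those orbits, so restricting to the smooth orbit $\Omega$ (open in this support) expresses $\phi_\Omega(d)$ as a multiple of the normal Euler class $e(N_\Omega)$, via the Gysin/self-intersection map. Together with the hypothesis that $c(T_\Omega)$ divides $\phi_\Omega(d)$, and the fact that $c(T_\Omega)=1+\cdots$ (unit constant term) and the homogeneous $e(N_\Omega)$ are coprime in the polynomial ring $H^*(BG_\Omega)$, it follows that the product $c(T_\Omega)e(N_\Omega)$ divides $\phi_\Omega(d)$. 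A nonzero such multiple has degree at least $\deg\bigl(c(T_\Omega)e(N_\Omega)\bigr)$, contradicting the degree hypothesis; hence $\phi_\Omega(d)=0$. Since the orbits are finite in number this induction terminates at the fixed point $\{0\}$, where $G_{\{0\}}=G$ and $\phi_{\{0\}}$ is the identity, so $d=\phi_{\{0\}}(d)=0$.

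I expect the main obstacle to be the two geometric inputs of the peeling step: justifying that a class supported on the complement of the larger orbits restricts on the next stratum to a multiple of $e(N_\Omega)$ (the stratum closures are singular, so this must be argued at the generic smooth point via the normal bundle), and checking the coprimality and degree bookkeeping that converts ``divisible by $c(T_\Omega)$ and by $e(N_\Omega)$'' into the contradiction with \eqref{axiom 3} --- here the nonvanishing $e(N_\Omega)\ne0$ from Assumption~\ref{assume} is essential. On the existence side, the analogous delicate point is the degree estimate \eqref{axiom 3}, i.e.\ controlling the equivariant top-degree term of the CSM class of a cone, which again rests on the cone hypothesis.
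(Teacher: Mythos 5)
The paper contains no proof of this theorem: it is imported from \cite{FR2} with a citation, so your attempt can only be compared with the argument given there. Your outline is essentially that argument --- uniqueness by downward induction over the orbit poset, using the Gysin/support lemma to get $e(N_\Omega)\mid\phi_\Omega(d)$ for the difference $d$ of two solutions (exactly where $e(N_\Omega)\neq 0$ from Assumption \ref{assume} enters), then the divisibility and degree hypotheses to force $\phi_\Omega(d)=0$, terminating at the orbit $\{0\}$ where $\phi_{\{0\}}$ is the identity; and existence via open restriction plus self-intersection for condition (1), the slice/product structure for (2), and the cone/vanishing-Euler-characteristic argument for (3) --- with the single loose point being your coprimality step: $H^*(BG_\Omega)$ need not be a UFD (or even a polynomial ring) in this generality, so the implication from ``$c(T_\Omega)\mid\phi_\Omega(d)$ and $e(N_\Omega)\mid\phi_\Omega(d)$'' to ``$c(T_\Omega)e(N_\Omega)\mid\phi_\Omega(d)$'' should instead be argued degree by degree (or in the completion, where $c(T_\Omega)$ is a unit), a standard and minor repair that does not affect the structure of the proof.
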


\noindent 
It also follows from these conditions that for an orbit $\Omega \not\subset \overline{\Sigma}$ we have $\phi_{\Omega}(\Sigma) = 0$.

%\subsection{Some enumerative applications of CSM classes} to be written

\section{Sieve formula for the CSM classes}\label{sec:sieveform}

Our goal is to calculate the $\GL_n(\C)$-equivariant CSM (or SSM) class of the orbits of $\LCn$ and $\SCn$. They are classes in
\[
H^*_{\GL_n(\C)}(\LCn)=H^*_{\GL_n(\C)}(\SCn)=H^*(B\!\GL_n(\C))=\Q[\alpha_1,\ldots,\alpha_n]^{S_n},
\]
where $\alpha_i$'s are the Chern roots of the tautological $n$-bundle over $B\!\GL_n(\C)$. In the whole paper $c_k$ will denote the $k$'th Chern class of that bundle, ie. the $k$'th elementary symmetric polynomial of the $\alpha_i$'s.

In this section we make calculations using ``traditional methods'', and achieve an exclusion-inclusion type formula (Theorems \ref{Thm ssm = sum of phi s-sym}, \ref{Thm ssm = sum of phi sym}), then in the next section we solve the relevant interpolation problem and find improved formulas.

\subsection{Fibered resolution}\label{fibred}

Consider a $G$-representation $V$, and an invariant closed subvariety $\Sigma \subset V$. The $G$-equivaraint map $\eta : \widetilde{\Sigma} \rightarrow V$ is called a fibered resolution of $\Sigma$ if there exists a $G$-equivariant commutative diagram
\begin{center}
\begin{tikzcd}
\widetilde{\Sigma} \arrow[rd] \arrow[r,"i"] \arrow[rr,out=30,in=150,"\eta"] & K \times V \arrow[d,"\pi_K"] \arrow[r,"\pi_V"] & V \\
 & K
\end{tikzcd}
\end{center}
where $\eta$ is a resolution of singularities of $\Sigma$, $\pi_V$ is the projection to $V$, $\pi_K$ is the projection to $K$, the map $\widetilde{\Sigma} \rightarrow K$ is a $G$-vector bundle over a smooth compact $G$-variety, $i$ is a $G$-equivariant embedding of vector bundles, and $\eta = \pi_V \circ i$. Let $\nu = (K \times V \rightarrow K)/(\widetilde{\Sigma} \rightarrow K)$ be the $G$-equivariant quotient bundle over $K$. A pullback of the bundle $\nu$ to $\widetilde{\Sigma}$ is the normal bundle of the embedding $i : \widetilde{\Sigma} \rightarrow K \times V$. Define
\begin{equation}\label{Phidef}
\Phi_{\Sigma} := 
\frac{\eta_*(c(T\widetilde{\Sigma}))}{c(V)} = \eta_*\left( \frac{c(T\widetilde{\Sigma})}{c(V)}\right)=\eta_*(c(-\nu) c(TK))=\int_K e(\nu)c(-\nu)c(TK).
\end{equation}
The equality of the displayed expressions is detailed in \cite[Section 10.1]{FR2}.

The significance of the class $\Phi_\Sigma$ is that, on the one hand, one can write formulas for it (due to its last displayed expression), and, on the other hand, the CSM class of $\Sigma$ is a linear combination of $\Phi$-classes of some varieties contained in $\Sigma$. 

\subsection{Sieve formula for SSM classes of orbits of $\LCn$}\label{sec:sieveLambda}

Elements $X\in \LCn$ will be identified with skew-symmetric bilinear forms on $\C^{n*}$, and in turn, with skew-symmetric linear maps $\C^{n*}\to \C^n$, without further notation.
For $0\leq r\leq n$, $n-r$ even, define 
\[
\widetilde{\Sigma}^{\wedge}_{n,r} := \{ (W,X) \in \Gr_r(\C^{n*}) \times \LCn, X|_{W} = 0 \},
\] 
and consider the diagram 
\begin{center}
\begin{tikzcd}
\widetilde{\Sigma}^{\wedge}_{n,r} \arrow[rd] \arrow[r,"i"] \arrow[rr,out=30,in=150,"\eta"] & \Gr_r(\C^{n*}) \times \LCn \arrow[d,"\pi_1"] \arrow[r,"\pi_2"] & \LCn \\
 & \Gr_r(\C^{n*}),
\end{tikzcd}
\end{center}
where $i$ is the inclusion, $\pi_1, \pi_2$ are projection maps on the first and second coordinates, respectively. This diagram is a fibered resolution of $\overline{\Sigma}^{\wedge}_{n,r}$. 
Consider the corresponding $\Phi$-class (see~\eqref{Phidef})
\[
\Phi^\wedge_{n,r} = \int_{\Gr_r(\C^{n*})} e(\nu)c(-\nu)c(T\Gr_r(\C^{n*})),
\]
where $\nu$ is the quotient bundle $(\Gr_r(\C^{n*}) \times \LCn \to \Gr_r( \C^{n*}))/(\widetilde{\Sigma}^{\wedge}_{n,r} \to \Gr_r(\C^{n*}))$.

\begin{proposition}\label{Proposition Phi formula}
For $0 \leq r \leq n$, $n-r$ even, we have 
\begin{equation}\label{philoc}
\Phi^{\wedge}_{n,r} = \sum_{\substack{ I \subset [n] \\ \mid I \mid = r }} \left( \prod_{i<j \in I}\frac{\alpha_i + \alpha_j}{1+\alpha_i+\alpha_j} \prod_{i \in I} \prod_{j \in \bar{I}} \frac{(\alpha_i+\alpha_j)(1-\alpha_j+\alpha_i)}{(1+\alpha_i+\alpha_j)(-\alpha_j+\alpha_i)} \right).
\end{equation}
\end{proposition}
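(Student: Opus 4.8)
The plan is to evaluate the equivariant integral defining $\Phi^\wedge_{n,r}$ directly by Atiyah--Bott localization with respect to the maximal torus $T=(\C^*)^n\subset\GL_n(\C)$. Since $H^*(B\GL_n(\C))=H^*(BT)^{S_n}$ injects into $H^*(BT)=\Q[\alpha_1,\ldots,\alpha_n]$, it is enough to compute after restriction to $T$, where each $\alpha_i-\alpha_j$ ($i\neq j$) becomes invertible; the Weyl symmetry of the answer will be visible as the sum over all $I$. The integral $\int_{\Gr_r(\C^{n*})}$ is equivariant pushforward to a point, so localization rewrites it as a sum over the isolated $T$-fixed points of the restricted integrand divided by the Euler class of the tangent space.

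First I would record the fixed-point data. With $\C^n$ carrying $T$-weights $\alpha_1,\ldots,\alpha_n$ (hence $\C^{n*}$ carrying $-\alpha_1,\ldots,-\alpha_n$), the fixed points of $\Gr_r(\C^{n*})$ are the coordinate subspaces $W_I=\spa\{e_i^*:i\in I\}$ for $I\in\binom{[n]}{r}$. The tangent space at $W_I$ is $\Hom(W_I,\C^{n*}/W_I)$, with weights $\{\alpha_i-\alpha_j:i\in I,\ j\in\bar I\}$, so
\[
e(T_{W_I}\Gr_r(\C^{n*}))=\prod_{i\in I,\,j\in\bar I}(\alpha_i-\alpha_j),\qquad c(T_{W_I}\Gr_r(\C^{n*}))=\prod_{i\in I,\,j\in\bar I}(1+\alpha_i-\alpha_j).
\]

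The crucial step is identifying the fiber of $\nu$ over $W_I$ as a $T$-representation, and here the point to get right is the meaning of the defining condition $X|_W=0$. Reading $X$ as a skew-symmetric map $\C^{n*}\to\C^n$, the condition says $W\subseteq\ker X$ (that $W$ is annihilated by $X$, not merely isotropic for the associated form). At $W_I$ this forces the $I$-indexed rows and columns of the skew matrix $X$ to vanish, so the fiber of $\widetilde{\Sigma}^\wedge_{n,r}$ is spanned by the coordinates $e_i\wedge e_j$ with $i,j\in\bar I$; its weights are $\{\alpha_i+\alpha_j:i<j\in\bar I\}$. Consequently $\nu|_{W_I}$ is spanned by the complementary coordinates, those $e_i\wedge e_j$ with $\{i,j\}\cap I\neq\emptyset$, with weights $\{\alpha_i+\alpha_j:i<j\in I\}\cup\{\alpha_i+\alpha_j:i\in I,\ j\in\bar I\}$. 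As a sanity check, the dimension count $r(n-r)+\binom{n-r}{2}=\binom{n}{2}-\binom{r}{2}$ confirms that $\eta$ is generically one-to-one onto $\overline{\Sigma}^\wedge_{n,r}$, consistent with $\codim=\binom{r}{2}$. From these weights one reads off $e(\nu)|_{W_I}$ and $c(-\nu)|_{W_I}=1/c(\nu)|_{W_I}$.

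Finally I would assemble the localization contribution at $W_I$,
\[
\frac{\bigl(e(\nu)\,c(-\nu)\,c(T\Gr_r(\C^{n*}))\bigr)|_{W_I}}{e(T_{W_I}\Gr_r(\C^{n*}))},
\]
and simplify. The pairs $i<j\in I$ contribute $\prod\frac{\alpha_i+\alpha_j}{1+\alpha_i+\alpha_j}$, while combining the $I\times\bar I$ weights of $\nu$ with the tangent factors yields $\prod_{i\in I,\,j\in\bar I}\frac{(\alpha_i+\alpha_j)(1+\alpha_i-\alpha_j)}{(1+\alpha_i+\alpha_j)(\alpha_i-\alpha_j)}$, which is exactly the claimed summand after rewriting $1+\alpha_i-\alpha_j=1-\alpha_j+\alpha_i$ and $\alpha_i-\alpha_j=-\alpha_j+\alpha_i$. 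Summing over all $I\in\binom{[n]}{r}$ gives \eqref{philoc}. The only genuine obstacle is the representation-theoretic bookkeeping of $\nu|_{W_I}$: correctly interpreting $X|_W=0$ as $W\subseteq\ker X$ and thereby capturing the ``mixed'' factors $\alpha_i+\alpha_j$ with $i\in I,\ j\in\bar I$. Once the weights are right, the rest is routine manipulation of products of linear forms, with no non-vanishing or convergence issues since all $\alpha_i-\alpha_j$ ($i\neq j$) are invertible in the localized ring.
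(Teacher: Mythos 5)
Your proposal is correct and follows essentially the same route as the paper: both identify the fiber of $\widetilde{\Sigma}^\wedge_{n,r}\to\Gr_r(\C^{n*})$ via $X|_W=0 \Leftrightarrow W\subseteq\ker X$ (so the fiber is $\Lambda^2$ of the annihilator, i.e.\ $\Lambda^2(Q^*)$, giving $\nu=\Lambda^2(S^*)\oplus(S^*\otimes Q^*)$), and then evaluate $\int_{\Gr_r(\C^{n*})}e(\nu)c(-\nu)c(T\Gr_r(\C^{n*}))$ by Atiyah--Bott localization over the coordinate fixed points $W_I$. The only cosmetic difference is that the paper first writes the integrand globally in Chern roots of $S$ and $Q$ and then invokes localization in one line, whereas you carry out the fixed-point bookkeeping directly; the weights and the final assembly agree with \eqref{philoc}.
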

\begin{proof}
The fiber of the bundle $\widetilde{\Sigma}^{\wedge}_{n,r} \to \Gr_r(\C^{n*})$ over $W\in \Gr(\C^{n*})$ is $\{X\in \LCn: X|_W=0\}=\Lambda^2(\Ann(W))$ where $\Ann(W)=\{v\in \C^n: \phi(v)=0 \text{ for all } \phi\in W\} \in \Gr_{n-r}(\C^n)$. Hence the bundle $\widetilde{\Sigma}^{\wedge}_{n,r} \to \Gr_r(\C^{n*})$ is $\Lambda^2 (Q^*)$, where $0 \rightarrow S \rightarrow \C^{n*} \rightarrow Q \rightarrow 0$  is the tautological exact sequence of bundles over $\Gr_r(\C^{n*})$. 

Hence we have
\[
\nu=\LCn - \widetilde{\Sigma}^{\wedge}_{n,r} = 
\left( 
\Lambda^2 (S^*) \oplus \Lambda^2 (Q^*) \oplus (S^* \otimes Q^*)
\right) 
-
\Lambda^2 (Q^*)
=
\Lambda^2 (S^*) \oplus (S^*\otimes Q^*).
\]
Let $\delta_1,\dots,\delta_r$ be the Chern roots of the bunddle $S$, and let $\omega_1,\dots,\omega_{n-r}$ be the Chern roots of $Q$. Then 
\begin{align*}
e(\nu) &= \prod_{1 \leq i < j \leq r} (-\delta_i - \delta_j) \prod_{i=1}^{r} \prod_{j=1}^{n-r} (-\omega_j-\delta_i), \\
c(\nu) &= \prod_{1 \leq i < j \leq r} (1- \delta_i - \delta_j) \prod_{i=1}^{r} \prod_{j=1}^{n-r} (1-\omega_j-\delta_i), \\
c(T\Gr_r(\C^{n*})) &= \prod_{i=1}^{r} \prod_{j=1}^{n-r} (1+\omega_j-\delta_i),
\end{align*}
and from the definition of $\Phi^\wedge_{n,r}$ we obtain
\[
\Phi^\wedge_{n,r} = \int_{\Gr_r(\C^{n*})} \prod_{1 \leq i < j \leq r} \frac{-\delta_i-\delta_j}{1-\delta_i-\delta_j} \prod_{i=1}^{r} \prod_{j=1}^{n-r} \frac{(-\omega_j-\delta_i)(1+\omega_j-\delta_i)}{1-\omega_j-\delta_i}. 
\]
The equivairant localization formula for this integral in exactly \eqref{philoc}.
\end{proof}

\begin{proposition}\label{Proposition Phi = sum SSM}
For $0 \leq r \leq n$, $n-r$ even, we have
\begin{align*}
\Phi^\wedge_{n,r} &= \sum_{i=0}^{\frac{n-r}{2}} \binom{r + 2i}{r} \ssm(\Sigma^\wedge_{n,r+2i}) \\
&= \sum_{i=0}^{\frac{n-r}{2}} \left( \binom{r+2i}{r} - \binom{r+2i-2}{r} \right) \ssm(\overline{\Sigma}^\wedge_{n,r+2i}).
\end{align*}
\end{proposition}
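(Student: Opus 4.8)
The plan is to apply the functoriality property \eqref{csm property 3} of the CSM transformation to the resolution map $\eta : \widetilde{\Sigma}^{\wedge}_{n,r} \to \LCn$ and then pass from CSM to SSM classes by dividing by $c(V)$. First I would record that $\widetilde{\Sigma}^{\wedge}_{n,r}$ is smooth (it is a vector bundle over the compact Grassmannian $\Gr_r(\C^{n*})$) and that $\eta$ is proper (it factors through the closed subvariety $\widetilde{\Sigma}^{\wedge}_{n,r}\subset \Gr_r(\C^{n*})\times\LCn$, and $\Gr_r(\C^{n*})\times\LCn\to\LCn$ is proper), so property \eqref{csm property 3} applies and gives
\[
\eta_*\!\left(c(T\widetilde{\Sigma}^{\wedge}_{n,r})\right)=\sum_j j\cdot\csm(M_j),\qquad M_j=\{X\in\LCn:\chi(\eta^{-1}(X))=j\}.
\]

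The geometric heart of the argument is the computation of the fibers of $\eta$. For $X\in\LCn$ we have $\eta^{-1}(X)=\{W\in\Gr_r(\C^{n*}):X|_W=0\}$. Using the identification recorded in the proof of Proposition~\ref{Proposition Phi formula}, namely $X|_W=0 \iff X\in\Lambda^2(\Ann W) \iff \operatorname{Im}(X)\subseteq\Ann(W)$, I would rewrite the incidence condition as $W\subseteq(\operatorname{Im}X)^\perp$. If $X$ has corank $s$, then $\operatorname{Im}(X)$ has dimension $n-s$, so $(\operatorname{Im}X)^\perp\subseteq\C^{n*}$ has dimension $s$, and the fiber is exactly the Grassmannian $\Gr_r(\C^s)$, with $\chi(\Gr_r(\C^s))=\binom{s}{r}$. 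Thus the constructible function $X\mapsto\chi(\eta^{-1}(X))$ is constant on orbits, equal to $\binom{s}{r}$ on $\Sigma^{\wedge}_{n,s}$ (and $0$ for $s<r$, consistent with an empty fiber), so it equals $\sum_s\binom{s}{r}\One_{\Sigma^{\wedge}_{n,s}}$.

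Additivity \eqref{csm property 1} then yields $\eta_*(c(T\widetilde{\Sigma}^{\wedge}_{n,r}))=\sum_s\binom{s}{r}\csm(\Sigma^{\wedge}_{n,s})$, and dividing by $c(V)$—recalling that $\Phi^{\wedge}_{n,r}=\eta_*(c(T\widetilde{\Sigma}^{\wedge}_{n,r}))/c(V)$ and $\ssm=\csm/c(TM)$ with $TM=T\LCn=V$—gives the first displayed equality, the sum running over $s=r+2i$ because of the parity constraint that $n-s$ be even. For the second equality I would pass from orbits to orbit closures: since $\overline{\Sigma}^{\wedge}_{n,r}=\bigsqcup_{s\geq r,\,s\equiv r\,(2)}\Sigma^{\wedge}_{n,s}$, additivity gives $\ssm(\Sigma^{\wedge}_{n,s})=\ssm(\overline{\Sigma}^{\wedge}_{n,s})-\ssm(\overline{\Sigma}^{\wedge}_{n,s+2})$; substituting this, reindexing the shifted sum by $s\mapsto s+2$, and using $\overline{\Sigma}^{\wedge}_{n,n+2}=\emptyset$ telescopes the coefficients into $\binom{r+2i}{r}-\binom{r+2i-2}{r}$.

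I expect the main obstacle to be the fiber identification, i.e. tracking the dualities carefully so that $X|_W=0$ becomes the clean incidence condition $W\subseteq(\operatorname{Im}X)^\perp$ and the fiber is recognized as precisely $\Gr_r(\C^s)$. The remaining ingredients—verifying properness and smoothness so that property \eqref{csm property 3} applies, the division by $c(V)$, and the inclusion–exclusion reindexing—are essentially routine bookkeeping, requiring only minor care at the top boundary term of the telescoping sum.
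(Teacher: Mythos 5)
Your proposal is correct and follows essentially the same route as the paper: apply the functoriality property of CSM classes to the resolution $\eta$, identify the fiber over a corank-$s$ point as a Grassmannian of $r$-planes in an $s$-dimensional space (Euler characteristic $\binom{s}{r}$), divide by $c(V)$ to pass to SSM and $\Phi$ classes, and telescope via additivity for the closure version. Your duality bookkeeping ($X|_W=0 \iff W\subseteq(\operatorname{Im}X)^\perp$) is a correct elaboration of the fiber identification the paper states without proof.
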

\begin{proof}
The closure $\overline{\Sigma}^\wedge_{n,r} = \Sigma^\wedge_{n,r} \cup \Sigma^\wedge_{n,r+2} \cup \dots \cup \Sigma^\wedge_{n,n}$ is the image of $\eta$. For each $r \leq k \leq n$ and $n-k$ even, a preimage of each point in $\Sigma^\wedge_{n,k}$ is isomorphic to the space $\Gr_r(\mathbb{C}^{k*})$. Note that the Euler characteristic of the Grassmannian of all $r$-dimensional subspaces of a $k$-dimensional vector space over $\mathbb{C}$ is $\binom{k}{r}$. Using property \eqref{csm property 3} of CSM classes in Section \ref{sec:eqCSM}, we have 
\[
\eta_*(c(T\widetilde{\Sigma}^\wedge_{n,r})) = \sum_{i=0}^{\frac{n-r}{2}} \binom{r + 2i}{r} \csm(\Sigma^\wedge_{n,r+2i}).
\]
Dividing both sides by $c(\LCn)$ proves the first equality of the proposition. 

By the additivity property of SSM classes (see Section \ref{sec:eqCSM} \eqref{csm property 1}), we have
\begin{align*}
\sum_{i=0}^{\frac{n-r}{2}} \binom{r + 2i}{r} \ssm(\Sigma^\wedge_{n,r+2i}) &= \sum_{i=0}^{\frac{n-r}{2}} \binom{r + 2i}{r} \left( \ssm(\overline{\Sigma}^\wedge_{n,r+2i}) - \ssm(\overline{\Sigma}^\wedge_{n,r+2i+2}) \right) \\
&= \sum_{i=0}^{\frac{n-r}{2}} \left( \binom{r+2i}{r} - \binom{r+2i-2}{r} \right) \ssm(\overline{\Sigma}^\wedge_{n,r+2i}),
\end{align*}
which completes the proof.
\end{proof}

Proposition \ref{Proposition Phi = sum SSM} expresses the sought SSM classes as linear combinations of the $\Phi$-classes. Inverting the matrix of the coefficients of these linear combinations will therefore express the SSM classes as linear combinations of the $\Phi$-classes.

\begin{definition}\label{Def Euler number}
Define the Euler numbers $E_n$ by 
\[
\frac{1}{\cosh(x)} = \sum_{n=0}^{\infty} \frac{E_n}{n!}x^n.
\]
\end{definition}

For odd $n$ the number $E_n$ is zero. For even $n$ Euler numbers form an alternating sequence: $E_0=1, E_2=-1, E_4=5, E_6=-61, E_8=1385, E_{10}=-50512, \ldots$. For explicit formulas for the Euler numbers see e.g. \cite{Wei2015} and references therein. 

\begin{proposition}\label{Proposition Inverse matrix}
The inverse of the triangular matrix $\left( \binom{2j}{2i} \right)_{0 \leq i,j \leq m}$ is $\left( \binom{2j}{2i}E_{2j-2i} \right)_{0 \leq i,j \leq m}$, and the inverse of the triangular matrix $\left( \binom{2j+1}{2i+1} \right)_{0 \leq i,j \leq m}$ is $\left( \binom{2j+1}{2i+1}E_{2j-2i} \right)_{0 \leq i,j \leq m}$.
\end{proposition}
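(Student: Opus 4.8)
The plan is to verify directly that the two triangular matrices are mutual inverses, by checking that the product $\left(\binom{2j}{2i}\right)\left(\binom{2j}{2i}E_{2j-2i}\right)$ equals the identity. Since both matrices are lower/upper triangular with $1$'s on the diagonal (note $\binom{2i}{2i}=1$ and $E_0=1$), it suffices to show that for each fixed pair $k \leq m$ the off-diagonal entries of the product vanish. Concretely, writing the $(i,k)$ entry of the product, I would need to establish the identity
\[
\sum_{j=i}^{k} \binom{2j}{2i}\binom{2k}{2j} E_{2k-2j} = \delta_{ik}
\]
for $0 \leq i \leq k \leq m$, and an analogous identity with $\binom{2j+1}{2i+1}\binom{2k+1}{2j+1}$ for the second matrix.

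The key step is to reduce this double-binomial sum to a statement purely about the Euler numbers via the standard trinomial-revision identity $\binom{2j}{2i}\binom{2k}{2j} = \binom{2k}{2i}\binom{2k-2i}{2j-2i}$. First I would substitute this to factor out $\binom{2k}{2i}$, leaving the inner sum $\sum_{j=i}^{k}\binom{2k-2i}{2j-2i}E_{2k-2j}$; after reindexing with $p = j-i$ and setting $N = k-i$, the claim becomes
\[
\sum_{p=0}^{N} \binom{2N}{2p} E_{2N-2p} = \delta_{N,0},
\]
i.e. the sum over even-index terms of $\binom{2N}{\ell}E_{2N-\ell}$ vanishes for $N \geq 1$. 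The cleanest way to see this is through the generating function of Definition \ref{Def Euler number}: since $E_m = 0$ for odd $m$, the quantity $\sum_{\ell=0}^{2N}\binom{2N}{\ell}E_{2N-\ell}$ is exactly the even-indexed part, so I would read the target sum off the coefficient of $x^{2N}/(2N)!$ in the product $\cosh(x)\cdot\frac{1}{\cosh(x)} = 1$. That product's expansion has coefficient $\delta_{N,0}$ precisely because $\cosh(x)=\sum_m x^{2m}/(2m)!$ contributes only the even Euler numbers in the Cauchy product, yielding the binomial convolution above.

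The second matrix identity I would handle by the same mechanism, using the odd version of the trinomial revision to reduce to the \emph{same} core sum $\sum_{p}\binom{2N}{2p}E_{2N-2p}=\delta_{N,0}$; the shift from $2i+1,2j+1$ to $2i,2j$ in the reduced indices means the defining generating-function computation is reused verbatim. I expect the main obstacle to be purely bookkeeping: matching the index ranges in the trinomial-revision step and confirming that the odd-offset binomials telescope to the identical even-index core sum rather than to some shifted variant. Once the generating-function identity $\cosh(x)\cdot\cosh(x)^{-1}=1$ is invoked, both claims follow, so the only real care needed is in the combinatorial reindexing, not in any deeper analytic input.
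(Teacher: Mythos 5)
Your proposal is correct and follows essentially the same route as the paper's proof: verify triangularity, apply trinomial revision to factor out the outer binomial coefficient, and reduce to the convolution identity $\sum_{p}\binom{2N}{2p}E_{2N-2p}=\delta_{N,0}$, which is exactly the coefficient statement of $\cosh(x)\cdot\cosh(x)^{-1}=1$ that the paper invokes as the "defining equation." The treatment of the odd-indexed case by reduction to the same core sum also matches the paper's (abbreviated) handling.
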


\begin{proof} 
The product of $\left( \binom{2j}{2i} \right)_{0 \leq i,j \leq m}$ and $\left( \binom{2j}{2i}E_{2j-2i} \right)_{0 \leq i,j \leq m}$ is upper triangular. For $i\leq j$ its $(i,j)$'th entry is
\[
\sum_{k=i}^j\binom{2k}{2i} \binom{2j}{2k} E_{2j-2k}=\binom{2j}{2i} \sum_{k=i}^j \binom{2j-2i}{2j-2k}E_{2j-2k}=\begin{cases}
1 & i=j \\ 0 & i<j,\end{cases}
\]
where the last equality follows from the defining equation
\[
\left(1+\frac{t^2}{2!}+\frac{t^4}{4!}+\ldots\right)\left(1+\frac{E_2t^2}{2!}+\frac{E_4t^4}{4!}+\ldots\right)=1.
\]
The proof of the other statement is similar.
\end{proof}
For example
\begin{multline*}
\begin{pmatrix}
1 & 1 & 1 & 1 \\
0 & 1 & 6 & 15 \\
0 & 0 & 1 & 15 \\
0 & 0 & 0 & 1 
\end{pmatrix}^{-1}
=
\begin{pmatrix}
\binom{0}{0} & \binom{2}{0} & \binom{4}{0} & \binom{6}{0} \\
0             & \binom{2}{2} & \binom{4}{2} & \binom{6}{2} \\
0             & 0             & \binom{4}{4} & \binom{6}{4} \\
0             & 0             & 0             & \binom{6}{6} 
\end{pmatrix}^{-1}
= \\
\begin{pmatrix}
\binom{0}{0}E_0 & \binom{2}{0}E_2 & \binom{4}{0}E_4 & \binom{6}{0}E_6 \\
0             & \binom{2}{2}E_0 & \binom{4}{2}E_2 & \binom{6}{2}E_4 \\
0             & 0             & \binom{4}{4}E_0 & \binom{6}{4}E_2 \\
0             & 0             & 0             & \binom{6}{6}E_0 
\end{pmatrix}
=
\begin{pmatrix}
1 & -1 & 5 & -61 \\
0 & 1 & -6 & 75 \\
0 & 0 & 1 & -15 \\
0 & 0 & 0 & 1 
\end{pmatrix}.
\end{multline*}

\begin{theorem}\label{Thm ssm = sum of phi s-sym}
For $0 \leq r \leq n$, $n-r$ even, we have
\begin{equation}\label{eq:ssmPhi}
\ssm(\Sigma^\wedge_{n,r}) = \sum_{i=0}^{\frac{n-r}{2}} \binom{r+2i}{r} E_{2i} \cdot \Phi^\wedge_{n,r+2i} .
\end{equation}
\end{theorem}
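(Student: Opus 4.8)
The plan is to read the statement as a matrix inversion. Proposition~\ref{Proposition Phi = sum SSM} already expresses each $\Phi$-class as an explicit triangular linear combination of the SSM classes,
\[
\Phi^\wedge_{n,r} = \sum_{i=0}^{\frac{n-r}{2}} \binom{r+2i}{r}\,\ssm(\Sigma^\wedge_{n,r+2i}),
\]
and the asserted identity \eqref{eq:ssmPhi} is precisely the inverse relation, expressing each SSM class back in terms of the $\Phi$-classes. So the entire content of the theorem is that the coefficient matrix $\bigl(\binom{r+2i}{r}\bigr)$ appearing above is inverted by the matrix $\bigl(\binom{r+2i}{r}E_{2i}\bigr)$, and this is exactly what Proposition~\ref{Proposition Inverse matrix} provides. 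First I would fix $n$ and record that all coranks occurring for a given $n$ share its parity, so that the index $r$ ranges over $\{0,2,\ldots,n\}$ when $n$ is even and over $\{1,3,\ldots,n\}$ when $n$ is odd.

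Next I would set up the reindexing that matches our system to the two matrices of Proposition~\ref{Proposition Inverse matrix}. Writing $s_k := \ssm(\Sigma^\wedge_{n,k})$ and $\phi_k := \Phi^\wedge_{n,k}$, the relation above reads $\phi_r = \sum_{k}\binom{k}{r}s_k$, the sum over $k\in\{r,r+2,\ldots,n\}$. In the even case put $r=2a$, $k=2b$; the coefficient is $\binom{2b}{2a}$, so the system is $\vec\phi = M\vec s$ with $M=\bigl(\binom{2j}{2i}\bigr)_{0\le i,j\le n/2}$. In the odd case put $r=2a+1$, $k=2b+1$; the coefficient is $\binom{2b+1}{2a+1}$, so $M=\bigl(\binom{2j+1}{2i+1}\bigr)$. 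Both are the triangular matrices whose inverses are computed in Proposition~\ref{Proposition Inverse matrix}.

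Finally I would apply that proposition to get $\vec s = M^{-1}\vec\phi$. In the even case $(M^{-1})_{ab}=\binom{2b}{2a}E_{2b-2a}$, hence $s_{2a}=\sum_b \binom{2b}{2a}E_{2b-2a}\,\phi_{2b}$; substituting back $r=2a$, $k=2b=r+2i$ turns the coefficient into $\binom{r+2i}{r}E_{2i}$, which is \eqref{eq:ssmPhi}. The odd case is identical, using $\binom{2b+1}{2a+1}E_{2b-2a}$ in place of the even coefficient, and yields the same formula.

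The computation carries no analytic difficulty, since the hard algebra---the recognition of the Euler numbers as the inverse binomial coefficients---was already dispatched in Proposition~\ref{Proposition Inverse matrix}. The only thing to be careful about is the bookkeeping of the two parities and the verification that the substitutions $E_{2b-2a}=E_{2i}$ and $\binom{2b}{2a}=\binom{r+2i}{r}$ (respectively their odd analogues) land exactly on the stated coefficient; this is where an index slip would most likely hide, so I would double-check it against the example matrix displayed after Proposition~\ref{Proposition Inverse matrix}.
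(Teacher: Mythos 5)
Your proposal is correct and follows exactly the paper's own proof, which derives Theorem~\ref{Thm ssm = sum of phi s-sym} as an immediate consequence of Proposition~\ref{Proposition Phi = sum SSM} and Proposition~\ref{Proposition Inverse matrix}. The only difference is that you spell out the parity split and reindexing that the paper leaves implicit, and your bookkeeping (even case $r=2a$, $k=2b$; odd case $r=2a+1$, $k=2b+1$, both landing on the coefficient $\binom{r+2i}{r}E_{2i}$) is accurate.
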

\begin{proof}
This is a consequence of Proposition \ref{Proposition Phi = sum SSM} and Proposition \ref{Proposition Inverse matrix}.
\end{proof}

This theorem, together with the expression \eqref{philoc} for the $\Phi$-classes is our first formula for the SSM classes $\Sigma^\wedge_{n,r}$. We call it the sieve formula, because the coefficients in the summation \eqref{eq:ssmPhi} have alternating signs.

\begin{example} \rm
For $n=2$ we have
\[
\Phi^\wedge_{2,0} = 1, \qquad\qquad \Phi^\wedge_{2,2} = \frac{\alpha_1+\alpha_2}{1+\alpha_1+\alpha_2}
%&= (\alpha_1+\alpha_2) - (\alpha_1+\alpha_2)^2 + (\alpha_1+\alpha_2)^3 - (\alpha_1+\alpha_2)^4 + \dots \\
= c_1 - c_1^2 + c_1^3 - c_1^4 + \dots,
\]
and therefore 
\begin{align*}
\ssm(\Sigma^\wedge_{2,0}) & = \Phi^\wedge_{2,0} - \Phi^\wedge_{2,2} = 1 - c_1 + c_1^2 - c_1^3 + c_1^4 - \dots \\
\ssm(\Sigma^\wedge_{2,2}) &= \Phi^\wedge_{2,2} = c_1 - c_1^2 + c_1^3 - c_1^4 + \dots
\end{align*}
\noindent For $n=3$ we have
\begin{align*}
\Phi^\wedge_{3,1} &= 1 + (2c_1c_2-2c_3) + (-4c_1^2c_2+4c_1c_3) + (4c_1^3c_2+2c_1c_2^2-4c_1^2c_3-2c_2c_3) + \\
&  (-10c_1^2c_2^2+12c_1c_2c_3-2c_3^2) + (-8c_1^5c_2+24c_1^3c_2^2+2c_1c_2^3+8c_1^4c_3 -32c_1^2c_2c_3+8c_1c_3^2) + \dots, \\
\Phi^\wedge_{3,3} &= (c_1c_2-c_3) + (-2c_1^2c_2+2c_1c_3) + (2c_1^3c_2+c_1c_2^2-2c_1^2c_3-c_2c_3) +\\
&  (-5c_1^2c_2^2+6c_1c_2c_3-c_3^2) + (-4c_1^5c_2+12c_1^3c_2+c_1c_2^3+4c_1^4c_3 -16c_1^2c_2c_3-c_2^2c_3+4c_1c_3^2) + \dots,
\end{align*}
and 
\[
\ssm(\Sigma^\wedge_{3,1}) = \Phi^\wedge_{3,1} - 3\Phi^\wedge_{3,3}, \qquad\qquad \ssm(\Sigma^\wedge_{3,3}) = \Phi^\wedge_{3,3}.
\]
\end{example}

\subsection{Sieve formula for SSM classes of orbits of $\SCn$}

Arguments analogous to those in Section \ref{sec:sieveLambda} give the following theorem, we leave the details to the reader.

\begin{theorem}\label{Thm ssm = sum of phi sym}
For $0 \leq r \leq n$, we have
\begin{gather*}
\ssm(\Sigma^S_{n,r}) = \sum_{i=0}^{n-r} (-1)^i \binom{r+i}{r} \Phi^S_{n,r+i}, \\
\ssm(\overline{\Sigma}^S_{n,r}) = \sum_{i=0}^{n-r} (-1)^i \binom{r+i-1}{r-1} \Phi^S_{n,r+i}.
\end{gather*}
where 
\[
\Phi^{S}_{n,r} = \sum_{\substack{ I \subset [n] \\ \mid I \mid = r }} \left( \prod_{i \leq j \in I}\frac{\alpha_i + \alpha_j}{1+\alpha_i+\alpha_j} \prod_{i \in I} \prod_{j \in \bar{I}} \frac{(\alpha_i+\alpha_j)(1-\alpha_j+\alpha_i)}{(1+\alpha_i+\alpha_j)(-\alpha_j+\alpha_i)} \right).\qed
\] 
\end{theorem}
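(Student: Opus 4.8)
The plan is to mirror the argument developed in Section \ref{sec:sieveLambda} for the skew-symmetric case, adapting it to the symmetric representation $\SCn$. The authors explicitly say they leave the details to the reader, so the proof proposal is really a translation of the $\LCn$ argument, with the combinatorial bookkeeping adjusted for the fact that symmetric corank drops by $1$ (not $2$) as one descends orbits, and that the relevant fiber spaces are $\Lambda^2$ replaced by $S^2$. First I would set up the fibered resolution exactly as before: for $0\le r\le n$ define
\[
\widetilde{\Sigma}^{S}_{n,r} := \{ (W,X) \in \Gr_r(\C^{n*}) \times \SCn : X|_{W} = 0 \},
\]
with the analogous commutative diagram and projections $\pi_1,\pi_2$, giving a fibered resolution of $\overline{\Sigma}^S_{n,r}$. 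The fiber over $W$ is now $S^2(\Ann(W))$, so the bundle $\widetilde{\Sigma}^{S}_{n,r}\to \Gr_r(\C^{n*})$ is $S^2(Q^*)$, and the quotient bundle becomes $\nu=\SCn-S^2(Q^*)=S^2(S^*)\oplus(S^*\otimes Q^*)$. The only change from the skew-symmetric computation of $e(\nu)$, $c(\nu)$ is that the product over $S^2(S^*)$ runs over pairs $i\le j$ rather than $i<j$ (the diagonal terms $-2\delta_i$, $1-2\delta_i$ are now included); this is precisely what produces the $i\le j\in I$ product in the stated formula for $\Phi^S_{n,r}$. Applying the equivariant localization formula then yields the displayed expression for $\Phi^S_{n,r}$.

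Next I would establish the symmetric analogue of Proposition \ref{Proposition Phi = sum SSM}. The image of $\eta$ is $\overline{\Sigma}^S_{n,r}=\Sigma^S_{n,r}\cup\Sigma^S_{n,r+1}\cup\dots\cup\Sigma^S_{n,n}$, and over a point of $\Sigma^S_{n,k}$ the fiber is $\Gr_r(\C^{k*})$ with Euler characteristic $\binom{k}{r}$. Property \eqref{csm property 3} of CSM classes gives
\[
\eta_*(c(T\widetilde{\Sigma}^S_{n,r})) = \sum_{i=0}^{n-r} \binom{r+i}{r}\csm(\Sigma^S_{n,r+i}),
\]
and dividing by $c(\SCn)$ expresses $\Phi^S_{n,r}$ as a linear combination of the $\ssm(\Sigma^S_{n,r+i})$. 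The coefficient matrix is now $\bigl(\binom{r+i}{r}\bigr)$, i.e. the full Pascal-type triangular matrix $\bigl(\binom{j}{i}\bigr)$, whose inverse is the familiar signed binomial matrix $\bigl((-1)^{j-i}\binom{j}{i}\bigr)$ rather than the Euler-number matrix of Proposition \ref{Proposition Inverse matrix}. Inverting therefore produces the claimed $\ssm(\Sigma^S_{n,r})=\sum_i(-1)^i\binom{r+i}{r}\Phi^S_{n,r+i}$; the second formula for the closure follows either by the telescoping/additivity argument used in Proposition \ref{Proposition Phi = sum SSM}, or directly by summing $\ssm(\Sigma^S_{n,r+j})$ over $j$ and using the Vandermonde identity $\sum_j(-1)^j\binom{r+j-1}{r-1}$-type collapse.

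The main obstacle, though largely notational, is twofold. First, I must verify the combinatorial inversion carefully: unlike the skew case, where the index steps by $2$ and the inverse matrix carries Euler numbers, here the index steps by $1$ and the inverse is the elementary signed binomial inverse, so I want to confirm that $\sum_{k} (-1)^{k-i}\binom{k}{i}\binom{j}{k}=\delta_{ij}$ correctly reproduces the stated coefficients and that the telescoping for the closure formula yields $\binom{r+i-1}{r-1}$ as written. Second, and more substantively, one should check that Assumption \ref{assume} and the transversality/Whitney conditions underlying property \eqref{csm property 3} hold for $\SCn$ exactly as for $\LCn$ — in particular that $\eta$ is a genuine resolution of $\overline{\Sigma}^S_{n,r}$ and that the fiber over a rank-$(n-k)$ point really is $\Gr_r(\C^{k*})$ (this uses that the kernel of a corank-$k$ symmetric form is $k$-dimensional, so the subspaces $W$ on which $X$ vanishes are the $r$-subspaces of that $k$-dimensional annihilator). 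Granting these structural facts, which are identical in spirit to the skew-symmetric verification, the rest is the routine localization computation producing the $i\le j$ product, and I expect no genuine difficulty beyond keeping the diagonal contributions straight.
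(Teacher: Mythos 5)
Your proposal is correct and is essentially the paper's own proof: the paper proves this theorem by simply asserting that "arguments analogous to those in Section \ref{sec:sieveLambda} give the theorem," and your write-up carries out exactly that analogy — the fibered resolution with fiber $S^2(Q^*)$, the quotient bundle $\nu=S^2(S^*)\oplus(S^*\otimes Q^*)$ with diagonal factors producing the $i\leq j$ products, the pushforward identity $\Phi^S_{n,r}=\sum_i\binom{r+i}{r}\ssm(\Sigma^S_{n,r+i})$ via property \eqref{csm property 3}, and inversion of the full binomial matrix (signed binomial inverse rather than Euler numbers, precisely as the paper remarks after the theorem). All the details you flag check out, including the closure formula, whose coefficient $(-1)^i\binom{r+i-1}{r-1}$ follows from the partial alternating sum $\sum_{k=0}^{i}(-1)^k\binom{r+i}{k}=(-1)^i\binom{r+i-1}{i}$ or equivalently from the telescoping argument of Proposition \ref{Proposition Phi = sum SSM}.
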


The fact that the sieve coefficients for symmetric loci are simple binomial coefficients, not Euler numbers are due to the fact that $\Sigma^S_{n,r}$ orbits exist for all $r$ independent of parity, so at a certain point in the argument we need to invert the matrix of {\em all} binomial coefficients, not only the even ones.

\begin{example}\rm
For $n=2$, we have
\begin{align*}
\Phi^S_{2,0} &= 1, \\
\Phi^S_{2,1} &= \frac{2(\alpha_1+\alpha_2)(1+\alpha_1+\alpha_2+4\alpha_1\alpha_2)}{(1+2\alpha_1)(1+2\alpha_2)(1+\alpha_1+\alpha_2)} \\
&= 2c_1 - 4c_1^2 + 8c_1^3 + (-16c_1^4+8c_1^2c_2) + (32c_1^5-40c_1^3c_2) + \dots, \\
\Phi^S_{2,2} &= \frac{4\alpha_1\alpha_2(\alpha_1+\alpha_2)}{(1+2\alpha_1)(1+2\alpha_2)(1+\alpha_1+\alpha_2)} \\
&= 4c_1c_2 - 12c_1^2c_2 + (28c_1^3c_2-16c_1c_2^2) + \dots,
\end{align*}
and 
\begin{align*}
\ssm(\Sigma^S_{2,0}) &= \Phi^S_{2,0} - \Phi^S_{2,1} + \Phi^S_{2,2} =
1 - 2c_1 + 4c_1^2 + (-8c_1^3+4c_1c_2) + (16c_1^4-20c_1^2c_2) + \dots, \\
\ssm(\Sigma^S_{2,1}) &= \Phi^S_{2,1} - 2\Phi^S_{2,2} =
2c_1 - 4c_1^2 + (8c_1^3-8c_1c_2) + (-16c_1^4+32c_1^2c_2) + \dots, \\
\ssm(\Sigma^S_{2,2}) &= \Phi^S_{2,2} =
 4c_1c_2 - 12c_1^2c_2 + (28c_1^3c_2-16c_1c_2^2) + \dots.
\end{align*}
\end{example}

\section{Interpolation formula for CSM classes}\label{sec:intform}
In Section \ref{sec:W} we define some functions, that we call $W$-functions because of their vague similarity to {\em weight functions} in \cite{RTV}. Then in Section \ref{sec:CSMW} we show that they represent CSM classes.

\subsection{The W-functions}\label{sec:W}

For $I \subset [n]$ let $\boldsymbol{\alpha}_{I}=\{\alpha_{i} : i \in I \}$. A permutation $\tau\in S_k$ acts on a  rational function $f(\alpha_1,\ldots,\alpha_k)$ by $(\tau \cdot f)(\alpha_1,\alpha_2,\dots,\alpha_k) = f(\alpha_{\tau(1)},\alpha_{\tau(2)},\dots,\alpha_{\tau(k)})$.

\begin{definition}\label{def local skew-symm weight fcn}
For $0 \leq r \leq n$, $n-r$ even, define the skew-symmetric W-function
\[
W^{\wedge}_{n,r}(\boldsymbol{\alpha}_{[n]}) = \sum_{\substack{\vert I \vert = r \\ I \subset [n] }} \left( W^{\wedge}_{n-r}(\boldsymbol{\alpha}_{\bar{I}}) \prod_{i<j \in I}(\alpha_{i}+\alpha_{j}) \prod_{i \in I} \prod_{j \in \bar{I}} \frac{(\alpha_{i}+\alpha_{j})(1+\alpha_{i}+\alpha_{j})}{\alpha_{i}-\alpha_{j}} \right)
\]
where
\begin{align*}
W^{\wedge}_{k}(\boldsymbol{\alpha}_{[k]}) &= \frac{1}{2^{\frac{k}{2}} \cdot \left( \frac{k}{2} \right) ! } \sum_{ \tau \in S_{k} } \tau \left( \prod_{ 1 \leq i < j \leq k }  \frac{(1+\alpha_{i}+\alpha_{j})(\alpha_{i}+\alpha_{j})}{\alpha_{i}-\alpha_{j}}  \prod_{i=1}^{\frac{k}{2}}  \frac{\alpha_{2i-1}-\alpha_{2i}}{(\alpha_{2i-1}+\alpha_{2i})(1+\alpha_{2i-1}+\alpha_{2i})} \right).
\end{align*}
\end{definition}

Despite its appearance, $W^{\wedge}_{n,r}$ is a polynomial (denominators cancel), it is in fact an integer coefficient symmetric polynomial in $\boldsymbol{\alpha}_{[n]}$, of highest degree term of degree $\frac{1}{2}(n^2-2n+r)$. The function $W^{\wedge}_k$ can be rewritten as
\[
W^{\wedge}_{k}(\boldsymbol{\alpha}_{[k]}) = \frac{1}{\left( \frac{k}{2} \right) !} \sum_{ \substack{ \vert I_{1} \vert =  \dots = \vert I_{ \frac{k}{2} } \vert = 2 \\ I_{1} \cup \dots \cup I_{ \frac{k}{2} } = [k] } } \left( \prod_{ 1 \leq i < j \leq \frac{k}{2} } \prod_{\substack{i' \in I_{i} \\ j' \in I_{j}}} \frac{(\alpha_{i'}+\alpha_{j'})(1+\alpha_{i'}+\alpha_{j'})}{\alpha_{i'}-\alpha_{j'}} \right).
\]

\begin{example} \rm\label{ex:weight4}
We have $W^{\wedge}_{2,0} = 1, W^{\wedge}_{2,2} = \alpha_{1}+\alpha_{2}=c_1$, and
\begin{align*}
W^\wedge_{3,1} &= 1 + 2\alpha_1 + 2\alpha_2 + 2\alpha_3 + \alpha_1^2 + \alpha_2^2 +\alpha_3^2 +3\alpha_1\alpha_2 +3\alpha_1\alpha_3 +3\alpha_2\alpha_3=  1 + 2c_1 + c_1^2 + c_2,\\
W^\wedge_{3,3} &= \alpha_1^2\alpha_2 + \alpha_1^2\alpha_3 +\alpha_2^2\alpha_3 + \alpha_1\alpha_2^2 + \alpha_1\alpha_3^2 +\alpha_2\alpha_3^2 + 2\alpha_1\alpha_2\alpha_3=c_1c_2-c_3,\\
W^{\wedge}_{4,0}  &= 1 + 2c_{1} + c_{1}^{2} + 2c_{2} + 2c_{1}c_{2} +c_{2}^{2} + c_{1}c_{3} - 4c_{4}, \\
W^{\wedge}_{4,2}  &= c_{1} + 2 c_{1}^{2} + c_{1}^{3} + 2c_{1}c_{2} + 2c_{1}^{2}c_{2} + c_{1}c_{2}^{2} + c_{1}^{2}c_{3} - 4c_{1}c_{4}, \\
W^{\wedge}_{4,4}  &= c_{1}c_{2}c_{3} - c_{1}^{2}c_{4} - c_{3}^{2}.
\end{align*}
\end{example}

\begin{definition}\label{def local symm weight fcn}
For $0 \leq r \leq n$, define the symmetric W-function
\[
W^{S}_{n,r}(\boldsymbol{\alpha}_{[n]}) = \sum_{\substack{\vert I \vert = r \\ I \subset [n] }} \left( W^{S}_{n-r}(\boldsymbol{\alpha}_{\bar{I}}) \prod_{i \leq j \in I} (\alpha_{i}+\alpha_{j}) \prod_{i \in I} \prod_{j \in \bar{I}} \frac{(\alpha_{i}+\alpha_{j})(1+\alpha_{i}+\alpha_{j})}{\alpha_{i}-\alpha_{j}} \right)
\]
where
\begin{align*}
W^{S}_{k}(\boldsymbol{\alpha}_{[k]}) &= \frac{1}{\lfloor{\frac{k}{2}}\rfloor !}\sum_{\tau \in S_{k}} \tau \left( \prod_{1\leq i < j \leq k} \frac{(1+\alpha_{i}+\alpha_{j})(\alpha_{i}+\alpha_{j})}{\alpha_{i}-\alpha_{j}} \prod^{\lfloor{\frac{k}{2}}\rfloor}_{i=1} \frac{-\alpha_{2i}(1+2\alpha_{2i-1})(1-\alpha_{2i-1}+\alpha_{2i})}{(\alpha_{2i-1}+\alpha_{2i})(1+\alpha_{2i-1}+\alpha_{2i})} \right).
\end{align*}
\end{definition}

The function $W^{S}_{n,r}$ is also a symmetric polynomial in $\boldsymbol{\alpha}_{[n]}$, of highest degree component of degree $\frac{n(n+1)}{2}-\lfloor{\frac{n-r+1}{2}}\rfloor$.

\begin{example} \rm
We have
$W^{S}_{2,0} = 1+\alpha_{1}+\alpha_{2}+4\alpha_{1}\alpha_{2}=1+c_1+4c_2$, 
$W^{S}_{2,1} = 2\alpha_{1} + 2\alpha_{2} + 2\alpha_{1}^{2} + 4\alpha_{1}\alpha_{2} + 2\alpha_{2}^{2}
=2c_1+2c_1^2$, $W^{S}_{2,2}= 4\alpha_{1}^{2}\alpha_{2} + 4\alpha_{1}\alpha_{2}^{2}=4c_1c_2$.
\end{example}

\subsection{CSM classes in $\LCn$ and $\SCn$ as W-functions}\label{sec:CSMW}

\begin{theorem}\label{localization thm}
We have $\csm(\Sigma^{\wedge}_{n,r})= W_{n,r}^{\wedge}$ and $\csm(\Sigma^{S}_{n,r}) = W_{n,r}^{S}$.
\end{theorem}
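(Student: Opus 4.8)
The plan is to verify that the $W$-functions satisfy the three interpolation conditions of Theorem~\ref{Interpolation axiom}, which uniquely characterize the CSM class. First I would check that Assumption~\ref{assume} applies: the representations $\LCn$ and $\SCn$ have finitely many orbits (indexed by corank), the orbits are cones (the action is linear and the rank stratification is conical), and the Euler classes $e(N_\Sigma)$ are nonzero. Once this is in place, it suffices to prove that $\phi_\Omega(W^\wedge_{n,r})$ matches the prescribed interpolation data for every orbit $\Omega$, and then invoke uniqueness. I would treat the skew-symmetric case $W^\wedge_{n,r} = \csm(\Sigma^\wedge_{n,r})$ in detail and indicate that the symmetric case is entirely parallel.

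The key computational input is the restriction map $\phi_\Omega$. For each orbit $\Omega = \Sigma^\wedge_{n,k}$ I would identify its stabilizer $G_\Omega$ and compute $\phi_\Omega$ in terms of Chern roots: the stabilizer of the model $X^\wedge_{n,k}$ is an extension involving $\Sp_{n-k}(\C) \times \GL_k(\C)$ (acting on the kernel), so restricting to a maximal torus, the map $\phi_\Omega$ sends the Chern roots $\boldsymbol{\alpha}_{[n]}$ to a specific substitution reflecting this block structure. The summation over $I \in \binom{[n]}{r}$ in the definition of $W^\wedge_{n,r}$ is engineered so that, under $\phi_\Omega$, all but a controlled set of terms vanish (because of the factors $\alpha_i - \alpha_j$ in denominators together with the torus restriction forcing certain roots to specialize). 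I expect this is exactly the localization bookkeeping: the index set $I$ plays the role of the ``chosen'' corank-$r$ directions, and the inner factor $W^\wedge_{n-r}(\boldsymbol{\alpha}_{\bar I})$ encodes the symplectic symmetrization over the surviving block.

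Concretely, I would verify the three axioms in turn. For \eqref{axiom 1}, I restrict $W^\wedge_{n,r}$ to its own orbit $G_{\Sigma^\wedge_{n,r}}$ and check it equals $c(T_\Sigma)e(N_\Sigma)$; the symmetrizing prefactor $1/(2^{k/2}(k/2)!)$ in $W^\wedge_k$ should account precisely for the order of the relevant Weyl group (of $\Sp$), so that the self-restriction produces the normalization term without over- or under-counting. For \eqref{axiom 2}, I show $c(T_\Omega)$ divides $\phi_\Omega(W^\wedge_{n,r})$ for every $\Omega$; the tangent-bundle factors $(1+\alpha_i+\alpha_j)$ and $(\alpha_i+\alpha_j)$ appearing in the product are arranged so that after restriction the requisite divisibility holds. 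For the degree condition \eqref{axiom 3}, I compare the top degree $\frac12(n^2-2n+r)$ of $W^\wedge_{n,r}$ (stated just after Definition~\ref{def local skew-symm weight fcn}) against $\deg(c(T_\Omega)e(N_\Omega))$ for $\Omega \neq \Sigma$, confirming the strict inequality off the diagonal.

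\textbf{The main obstacle} will be the explicit evaluation of $\phi_\Omega$ on the full double-sum, i.e.\ the localization/residue computation showing that the symmetrized expression over $S_{n-r}$ inside $W^\wedge_{n-r}$ collapses correctly when restricted to a smaller symplectic orbit. This is where the interaction between the two layers of summation (over subsets $I$ and over permutations $\tau$) must be disentangled; the subtlety is that many $I$ contribute with apparent poles $\alpha_i - \alpha_j$ that are only cancelled after summing, so one cannot restrict term-by-term naively but must first argue polynomiality (already asserted in the text) and then restrict. I would handle this by organizing the sum according to how $I$ meets the block decomposition of $\Omega$, reducing to a smaller instance of the same identity and arguing by induction on $n$; the base cases $n=2,3,4$ are checked against the values in Example~\ref{ex:weight4}.
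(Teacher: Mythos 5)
Your overall strategy coincides with the paper's: check Assumption~\ref{assume}, verify that $W^\wedge_{n,r}$ satisfies conditions \eqref{axiom 1}--\eqref{axiom 3} of Theorem~\ref{Interpolation axiom} by restricting along the stabilizers $\Sp(n-m,\C)\times\GL_m(\C)$, and invoke uniqueness; your reading of the prefactor $1/(2^{k/2}(k/2)!)$ as the order of the Weyl group of $\Sp$, cancelling the count of surviving terms in the self-restriction, is also exactly what happens. However, two concrete points in your sketch are wrong and need repair. First, the vanishing mechanism: terms do not die ``because of the factors $\alpha_i-\alpha_j$ in denominators'' --- a denominator can never force a term to vanish. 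They die because the torus of the stabilizer pairs the Chern roots as $\alpha_{2i-1}\mapsto\sigma_i$, $\alpha_{2i}\mapsto-\sigma_i$, so every \emph{numerator} factor $\alpha_a+\alpha_b$ indexed by a matched pair $\{a,b\}=\{2i-1,2i\}$ restricts to $0$. This is what kills all but one subset $I$ in the verification of \eqref{axiom 1}, and kills \emph{every} term when restricting to an orbit of corank $m<r$. Second, your degree argument for \eqref{axiom 3} --- comparing the global degree $\frac12(n^2-2n+r)$ of $W^\wedge_{n,r}$ with $\deg\bigl(c(T_\Omega)e(N_\Omega)\bigr)=\binom{n}{2}-\frac{n-m}{2}=\frac12(n^2-2n+m)$ --- gives the strict inequality only when $m>r$. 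For $m<r$ the inequality is false, and condition \eqref{axiom 3} holds there only because $\phi_{\Sigma^\wedge_{n,m}}(W^\wedge_{n,r})$ is identically zero (again by the numerator vanishing); your sketch never records this case, so as stated that step fails.

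Moreover, the difficulty you single out as the main obstacle is a phantom. Under $\phi_{\Sigma^\wedge_{n,m}}$ the images $\sigma_1,\ldots,\sigma_{(n-m)/2},\alpha_{n-m+1},\ldots,\alpha_n$ of the Chern roots remain algebraically independent, so every denominator $\alpha_i-\alpha_j$ occurring in $W^\wedge_{n,r}$ restricts to one of $\pm2\sigma_k$, $\pm\sigma_k\mp\sigma_l$, $\pm\sigma_k-\alpha_j$, or $\alpha_i-\alpha_j$, none of which is identically zero; hence term-by-term restriction of the rational expression is legitimate, and it is precisely how the paper computes. Your proposed detour (prove polynomiality, then induct on $n$ with base cases $n=2,3,4$) is therefore unnecessary, and as an argument it is also incomplete, since the induction step is never formulated. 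What plays its role in the paper is a clean two-stage reduction: first verify the three conditions directly for $W^\wedge_{n,0}$ (all $n$ even), then for general $r$ observe that every nonzero restricted term of $W^\wedge_{n,r}$ contains a restriction of the inner factor $W^\wedge_{n-r,0}(\boldsymbol{\alpha}_{\bar{I}})$, to which the $r=0$ computation applies.
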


\begin{proof}
We will show that $W^\wedge_{n,r}$ satisfies the three properties in Theorem~\ref{Interpolation axiom} for $\LCn$; along the way we will see that the representation $\LCn$ satisfies Assumption \ref{assume}, hence the verification of the three properties proves $\csm(\Sigma^{\wedge}_{n,r})= W_{n,r}^{\wedge}$.

Choosing the matrix $X^{\wedge}_{n,r}$ (see Section \ref{sec:TheReps}) as the representative of the orbit $\Sigma^{\wedge}_{n,r}$ we can read the following data:
\begin{itemize} 
\item The stabilizer group $\GL_n(\mathbb{C})_{\Sigma^{\wedge}_{n,r}}$ deformation retracts to $\Sp(n-r,\mathbb{C}) \times \GL_{r}(\mathbb{C})$. The maximal torus of $\Sp(n-r,\mathbb{C}) \times \GL_{r}(\mathbb{C})$ embeds into the maximal torus of $\GL_n(\C)$ by
$(s_1,s_2,\ldots,s_{(n-r)/2},a_{n-r+1},\ldots,a_n) \mapsto 
(s_1,-s_1,s_2,-s_2,\ldots,s_{(n-r)/2},-s_{(n-r)/2},a_{n-r+1},\ldots,a_n)$
and hence the map $\phi_{\Sigma^{\wedge}_{n,r}}$ on Chern roots is
\[
(\alpha_1,\ldots,\alpha_{n-r},\alpha_{n-r+1},\ldots,\alpha_n) \mapsto
(\sigma_1,-\sigma_1,\ldots,\sigma_{(n-r)/2},-\sigma_{(n-r)/2}, \alpha_{n-r+1},\ldots,\alpha_n).
\]
Equivalently, we have 
\[
\phi_{\Sigma^{\wedge}_{n,r}}: 
\prod_{i=1}^{n}(1+\alpha_i)
\mapsto
\prod_{i=1}^{\frac{n-r}{2}}(1-\sigma_i^2) \prod_{i=n-r+1}^{n}(1+\alpha_i ).
\]
\item  
$T_{\Sigma^\wedge_{n,r}}=\spa( e_i\otimes e_j-e_j\otimes e_i: 1\leq i<j\leq n, i\leq n-r)$, $N_{\Sigma^\wedge_{n,r}}=\spa( e_i\otimes e_j-e_j\otimes e_i: n-r+1\leq i<j\leq n)$ and hence
\begin{align*}
c(T_{\Sigma^{\wedge}_{n,r}}) &= \prod_{1 \leq i < j \leq \frac{n-r}{2}} (1 \pm \sigma_{i} \pm \sigma_{j}) \prod_{i=1}^{\frac{n-r}{2}} \prod_{j=n-r+1}^{n} (1 \pm \sigma_{i} + \alpha_{j}) , \\
e(N_{\Sigma^{\wedge}_{n,r}}) &= \prod_{n-r+1 \leq i < j \leq n} (\alpha_{i} + \alpha_{j}),
\end{align*}
where we used the short-hand notations $(x \pm \sigma):=(x + \sigma)(x-\sigma)$ and $(1\pm \sigma_i \pm \sigma_j):=(1+\sigma_i+\sigma_j)(1+\sigma_i-\sigma_j)(1-\sigma_i+\sigma_j)(1-\sigma_i-\sigma_j)$. 
\end{itemize}
We see that $e(N_{\Sigma^{\wedge}_{n,r}})\not=0$ which proves that the representation $\LCn$ satisfies Assumption \ref{assume}.

Now we need to prove that the function $W^{\wedge}_{n,r}$ satisfies properties \eqref{axiom 1}--\eqref{axiom 3} of Theorem \ref{Interpolation axiom}. Towards this goal, first we verify this claim for the special case of $r=0$ (and necessarily $n$ even).
Recall that 
\[
W^{\wedge}_{n,0}(\boldsymbol{\alpha}_{[n]}) = \frac{1}{2^{\frac{n}{2}} \left( \frac{n}{2} \right) !} \sum_{ \sigma \in S_{n} }  \sigma \left( \prod_{ 1 \leq i < j \leq \frac{n}{2} } f_{2i-1,2j-1}f_{2i-1,2j}f_{2i,2j-1}f_{2i,2j} \right)
\]
where $f_{i,j} = \frac{(1+\alpha_{i}+\alpha_{j})(\alpha_{i}+\alpha_{j})}{(\alpha_{i}-\alpha_{j})}.$
Applying $\phi_{\Sigma^{\wedge}_{k,0}}$ (as described above) to this expression term-by-term we obtian many 0 terms. The only non-zero terms correspond to $\sigma \in S_{n}$ such that $\sigma = \tau_{1} \dots \tau_{\ell}$ for some $1 \leq \ell \leq \frac{n}{2}$ where $\tau_{i} = (2i'-1,2i')$ for some $1 \leq i' \leq \frac{n}{2}$.
There are $2^{\frac{n}{2}} (\frac{n}{2})!$ such terms, all of the same value, hence
\begin{align*}
\phi_{\Sigma^{\wedge}_{n,0}}(W^{\wedge}_{n,0}) & = \frac{1}{2^{\frac{n}{2}} \left( \frac{n}{2} \right)!} \cdot 2^{\frac{n}{2}} \left( \frac{n}{2} \right)! \prod_{1 \leq i < j \leq \frac{n}{2}} \frac{(1 \pm \sigma_{i} \pm \sigma_{j})(\pm \sigma_{i} \pm \sigma_{j})}{(\pm \sigma_{i} \pm \sigma_{j})} \\
& = \prod_{1 \leq i < j \leq \frac{n}{2}} (1 \pm \sigma_{i} \pm \sigma_{j}) \\
& = c(T_{\Sigma^{\wedge}_{n,0}})=c(T_{\Sigma^{\wedge}_{n,0}})e(N_{\Sigma^{\wedge}_{n,0}}),
\end{align*}
which verifies property \eqref{axiom 1}.

To show that  $W^{\wedge}_{k,0}$ satisfies properties \eqref{axiom 2} and \eqref{axiom 3}, consider the restriction map $\phi_{\Sigma^{\wedge}_{n,m}}$ where $0\leq m \leq n$, $n-m$ even. The non-zero terms in the image $\phi_{\Sigma^{\wedge}_{n,m}}(W^{\wedge}_{n,0})$ are those with $\sigma = \tau_{1} \dots \tau_{\ell} \in S_n$ for some $1 \leq \ell \leq \frac{n-m}{2}$ such that $\tau_{i} = (2i'-1,2i')$ for some $1 \leq i' \leq \frac{n-m}{2}$. The $\phi_{\Sigma^{\wedge}_{n,m}}$-image of $(1 + \alpha_{2i-1} + \alpha_{2j-1})(1 + \alpha_{2i-1} + \alpha_{2j})(1 + \alpha_{2i} + \alpha_{2j-1})(1 + \alpha_{2i} + \alpha_{2j})$ is
\[
\begin{cases}
                 (1 \pm \sigma_{i} \pm \sigma_{j}) & \text{if } \ \ 1 \leq i < j \leq \frac{k-m}{2}, \\
                 (1 \pm \sigma_{i} + \alpha_{2j-1})(1 \pm \sigma_{i} + \alpha_{2j}) & \text{if } \ \ 1 \leq i \leq \frac{k-m}{2} < j \leq \frac{k}{2},
                 \end{cases}
\]
and we see that $\prod_{1 \leq i < j \leq \frac{n-m}{2}} (1 \pm \sigma_{i} \pm \sigma_{j}) \prod_{i=1}^{\frac{n-m}{2}} \prod_{j=n-m+1}^{n} (1 \pm \sigma_{i} + \alpha_{j})=c(T_{\Sigma^\wedge_{n,m}})$ is a common factor in every term of $\phi_{\Sigma^{\wedge}_{n,m}}(W^{\wedge}_{n,0})$, hence $W^{\wedge}_{n,0}$ satisfies property \eqref{axiom 2}.

Assume that $0 < m \leq k$. Each term in the image $\phi_{\Sigma^{\wedge}_{n,m}}(W^{\wedge}_{n,0})$ has degree at most $4 \cdot \binom{n/2}{2}$. Therefore 
\[
\deg(\phi_{\Sigma^{\wedge}_{n,m}}(W^{\wedge}_{n,0})) \leq 4 \cdot \binom{n/2}{2} < \binom{n}{2} - \frac{n-m}{2} = \deg(c(T_{\Sigma^{\wedge}_{n,m}})e(N_{\Sigma^{\wedge}_{n,m}})),
\]
and therefore $W^{\wedge}_{n,0}$ satisfies property \eqref{axiom 3}.

\smallskip

Next we show that the general $W^{\wedge}_{n,r}$ satisfies the properties \eqref{axiom 1}--\eqref{axiom 3} of Theorem \ref{Interpolation axiom}. First consider $\phi_{\Sigma^{\wedge}_{n,r}}(W^{\wedge}_{n,r})$. Due to the factor $\prod_{i \in I} \prod_{j \in \bar{I}} (\alpha_{i}+\alpha_{j})$ in the numerator of $W^{\wedge}_{n,r}$ it follows that only one term has non-zero $\phi_{\Sigma^{\wedge}_{n,r}}$-image, and we obtain
\begin{align*}
\phi_{\Sigma^{\wedge}_{n,r}}(W^{\wedge}_{n,r}) & = \phi_{\Sigma^{\wedge}_{n,r}}(W^{\wedge}_{n-r,0}(\boldsymbol{\alpha}_{\bar{I}})) \prod_{n-r+1 \leq i < j \leq n} (\alpha_{i} + \alpha_{j}) \prod_{i=n-r+1}^{n} \prod_{j=1}^{\frac{n-r}{2}} \frac{(\alpha_{i} \pm \sigma_{j})(1 + \alpha_{i} \pm \sigma_{j})}{(\alpha_{i} \pm \sigma_{j})} \\
& = \prod_{1 \leq i < j \leq \frac{n-r}{2}} (1 \pm \sigma_{i} \pm \sigma_{j}) \prod_{n-r+1 \leq i < j \leq n} (\alpha_{i} + \alpha_{j}) \prod_{i=n-r+1}^{n} \prod_{j=1}^{\frac{n-r}{2}} (1 + \alpha_{i} \pm \sigma_{j}) \\
& = c(T_{\Sigma^{\wedge}_{n,r}}) e(N_{\Sigma^{\wedge}_{n,r}}).
\end{align*}
This proves property \eqref{axiom 1}.

Now let $0 \leq m \leq n$, $n-m$ even, and $m \neq r$, and we study $\phi_{\Sigma^{\wedge}_{n,m}}(W^{\wedge}_{n,r})$. If $m < r$, then $\phi_{\Sigma^{\wedge}_{n,m}}(W^{\wedge}_{n,r})=0$ either because of the factor $\prod_{i < j \in I} (\alpha_{i} + \alpha_{j})$ or because of the factor $\prod_{i \in I} \prod_{j \in \bar{I}} (\alpha_{i} + \alpha_{j})$ in $W^{\wedge}_{n,r}$. If $m > r$ then all the non-zero terms in the image come from the terms in $W^{\wedge}_{n,r}$ with $I$ such that  $I \cap \{1, \dots , n-m\} = \emptyset$. Let $I \subset \{n-m+1, \dots , n\}$, and $\vert I \vert = r$. Then  
\[
\phi_{\Sigma^{\wedge}_{n,m}}(W^{\wedge}_{n-r,0}(\boldsymbol{\alpha}_{\bar{I}})) = \prod_{1 \leq i < j \leq \frac{n-m}{2}} (1 \pm \sigma_{i} \pm \sigma_{j}) \prod_{j \in \bar{I}-[n-m]} \prod_{k=1}^{\frac{n-m}{2}} (1 + \alpha_{j} \pm \sigma_{k}),
\]
and
\begin{align*}
& \phi_{\Sigma^{\wedge}_{n,m}} \left(\prod_{i<j \in I}(\alpha_i + \alpha_j) \prod_{i \in I} \prod_{j \in \bar{I}} \frac{(\alpha_{i}+\alpha_{j})(1+\alpha_{i}+\alpha_{j})}{\alpha_{j}-\alpha_{i}} \right) = \\
& \ \ \ \ \prod_{i<j \in I}(\alpha_i + \alpha_j) \prod_{i \in I} \prod_{j \in \bar{I}-[n-m]} \prod_{k=1}^{\frac{n-m}{2}} \frac{(\alpha_{i} + \alpha_{j})(\alpha_{i} \pm \sigma_{k})(1+\alpha_{i} + \alpha_{j})(1+\alpha_{i} \pm \sigma_{k})}{(\alpha_{j} - \alpha_{i})(\pm \sigma_{k} - \alpha_{i})}.
\end{align*}
The factor $\prod_{1 \leq i < j \leq \frac{n-m}{2}} (1 \pm \sigma_{i} \pm \sigma_{j}) \prod_{j=n-m+1}^{n} \prod_{k=1}^{\frac{n-m}{2}} (1 + \alpha_{j} \pm \sigma_{k})=c(T_{\Sigma^{\wedge}_{n,m}})$ is a common factor in all non-zero terms of $\phi_{\Sigma^{\wedge}_{n,m}}(W^{\wedge}_{n,r})$, which proves property \eqref{axiom 2}.

Now we consider the degree of  $\phi_{\Sigma^{\wedge}_{n,m}}(W^{\wedge}_{n,r})$ for $m \neq r$. We have $\deg(\phi_{\Sigma^{\wedge}_{n,m}}(W^{\wedge}_{n-r,0})) < \binom{n-r}{2} - \frac{n-m}{2}$, and hence
\begin{align*}
\deg(\phi_{\Sigma^{\wedge}_{n,m}}(W^{\wedge}_{n,r})) & <  \binom{n-r}{2} - \frac{n-m}{2}  + \binom{r}{2} + r(n-r) \\
& = \binom{n}{2} - \frac{n-m}{2} \\
& = \deg(c(T_{\Sigma^{\wedge}_{n,m}})e(N_{\Sigma^{\wedge}_{n,m}})),
\end{align*}
proving property \eqref{axiom 3}. This completes the proof of the first statement, $\csm(\Sigma^{\wedge}_{n,r})= W_{n,r}^{\wedge}$, of the theorem. 

The proof of the second statement, the case of $\SCn$, is analogous, we leave it to the reader (or see \cite{P}). 
\end{proof}

\section{Towards the algebraic combinatorics of CSM classes of $\LCn, \SCn$} \label{algcomb}
Characteristic classes of geometrically relevant varieties usually display stabilization and positivity properties. We can expect stabilization properties from the SSM versions, not from the CSM versions, because the SSM version is the one consistent with pull-back (and hence transversal intersection). Also, traditionally the combinatorics of characteristic classes show their true nature when they are expanded in Schur basis.  

Our formulas for the SSM classes obtained in Sections \ref{sec:sieveform} and \ref{sec:intform} can be expanded in Schur basis (to fix our conventions, note that $s_{11}=\sum_{i<j} \alpha_i\alpha_j$, $s_{2}=\sum_{i\leq j} \alpha_i\alpha_j$), and we obtain

\begin{align*}
\ssm(\Sigma^\wedge_{2,0})= & s_0 - s_1 + (s_{2} + s_{11}) - (s_3+2s_{21}) + (s_4+2s_{22}+3s_{31}) - \ldots \\
\ssm(\Sigma^\wedge_{4,0})= & s_0 - s_1 + (s_{2} + s_{11}) - (s_3+2s_{21}+s_{111}) + (s_4+2s_{22}+3s_{31}+3s_{211}+s_{1111}) \\
 &  \ \hskip 4.5 true cm - (s_5+5s_{32}+4s_{41}+5s_{221}+6s_{311}+4s_{2111})+\ldots \\
\ssm(\Sigma^\wedge_{2,2})= & s_1 - (s_2+s_{11}) + (s_3+2s_{21}) - (s_4+2s_{22}+3s_{31}) + \ldots \\
\ssm(\Sigma^\wedge_{4,2})= & s_1 - (s_2+s_{11}) + (s_3+2s_{21}+s_{111}) - (s_4+2s_{22}+3s_{31}+3s_{211}+s_{1111}) \\
& \hskip 4.7 true cm  + (s_5+5s_{32}+4s_{41}+5s_{221}+6s_{311}+4s_{2111}) - \ldots \\
\ssm(\Sigma^\wedge_{4,4}) = & s_{321} -  (3s_{322}+3s_{331}+3s_{421}+3s_{3211}) \\
& \hskip 1 true cm + (10s_{332}+10s_{422}+10s_{431}+6s_{521}+10s_{3221}+10s_{3311} +10s_{4211}) - \ldots
\end{align*}

\begin{align*}
\ssm(\Sigma^S_{1,0})=&  s_0-2s_1+4s_2-8s_3+16s_4-\ldots  \\
\ssm(\Sigma^S_{2,0})=&  s_0  - 2s_1 + (4s_2 + 4s_{11}) - (8s_3 + 12s_{21}) + (16s_4 + 12s_{22} + 28s_{31}) - \dots \\
\ssm(\Sigma^S_{3,0})=&  s_0 - 2s_1 + (4s_2 + 4s_{11}) - (8s_3 + 12s_{21} + 8s_{111}) + (16s_4 + 12s_{22} + 28s_{31} + 28s_{211}) - \dots \\
\ssm(\Sigma^S_{1,1})=&  2s_1-4s_2+8s_3-16s_4+\ldots  \\
\ssm(\Sigma^S_{2,1})=&   2s_1 - (4s_2+4s_{11}) + (8s_3 + 8s_{21}) - (16s_4 + 16s_{31}) + \dots \\
\ssm(\Sigma^S_{3,1})=&  2s_1 - (4s_2+4s_{11}) + (8s_3 + 8s_{21} + 8s_{111}) - (16s_4 + 16s_{31} + 16s_{211}) + \dots\\
\ssm(\Sigma^S_{2,2})=&  4s_{21} - (12s_{22}+12s_{31}) + (40s_{32}+28s_{41}) - \ldots  \\
\ssm(\Sigma^S_{3,2})=&  4s_{21} - (12s_{22} + 12s_{31} + 12s_{211}) + (40s_{32} + 28s_{41} + 40s_{221} + 40s_{311}) - \dots\\
\end{align*}

Here are some observations on these expressions:
\begin{itemize}
\item {\bf (stabilization)} For $n<m$ the formula for $\ssm(\Sigma_{m,r})$ also works for $\ssm(\Sigma_{n,r})$ (either $\LCn$ or $\SCn$). Of course, some $s_\lambda$ functions may be non-0 for $m$ variables, but $0$ for $n$ variables, so some terms of $\ssm(\Sigma_{m,r})$ are not necessary to name $\ssm(\Sigma_{n,r})$. The reason for this stabilization, on the one hand,  is that $\LCn$ can be viewed as a linear section of $\LCm$ (consistent with the group actions) such that orbits of $\LCn$ are transversal to this linear space, and on the other hand, SSM classes are consistent with transversal intersection, see~(iv) in Section \ref{sec:eqCSM}. Due to this stabilization one may consider the limit objects (formal power series) $\ssm(\Sigma_{\infty,r})$. In \cite{P} generating functions (in the ``iterated residue'' sense) are presented for these limit power series.
\item {\bf (positivity)} We expect that the coefficients of SSM classes in Schur basis have predictable signs. All the above examples support 
\begin{conjecture} The Schur expansions of $\ssm(\Sigma^\wedge_{n,r})$ and $\ssm(\Sigma^S_{n,r})$ have alternating signs. \end{conjecture}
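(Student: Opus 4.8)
\medskip

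\noindent\textbf{Proof proposal.} Write the conjecture in the sharp form: if $\ssm(\Sigma)=\sum_\lambda c_\lambda\, s_\lambda$ with $\Sigma$ equal to $\Sigma^\wedge_{n,r}$ or $\Sigma^S_{n,r}$, then $(-1)^{|\lambda|-\codim(\Sigma)}c_\lambda\ge 0$ for every $\lambda$. The first move is to trade the alternation for honest positivity: since $s_\lambda$ is homogeneous of degree $|\lambda|$, the ring involution $\alpha_i\mapsto-\alpha_i$ sends $s_\lambda\mapsto(-1)^{|\lambda|}s_\lambda$, so the statement is equivalent to the genuine Schur-positivity of $(-1)^{\codim(\Sigma)}\,\ssm(\Sigma)\big|_{\alpha\mapsto-\alpha}$. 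I would fix this normalization once and work with the positive version throughout.

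Second, I would reduce to the stable limit $n=\infty$. The stabilization observation (the first item above) says that the finite-$n$ class is obtained from the stable power series $\ssm(\Sigma_{\infty,r})$ by discarding all $s_\lambda$ with more than $n$ rows; since passing to a subset of the Schur coefficients cannot disturb a sign pattern, it suffices to prove Schur-positivity (after the involution) for the stable classes. This is precisely the setting in which \cite{P} supplies closed iterated-residue generating functions, so the problem collapses to the sign analysis of a single residue.

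The heart of the matter is therefore this stable generating function. Its kernel, inherited from the $W$-functions of Theorem~\ref{localization thm} (equivalently from the localized $\Phi$-classes \eqref{philoc}), is built from three kinds of factors: the \emph{positive} numerators $\prod(\alpha_i+\alpha_j)$, the denominators $\prod 1/(1+\alpha_i+\alpha_j)$, and the \emph{antisymmetric} denominators $\prod 1/(\alpha_i-\alpha_j)$. Under the involution $\alpha\mapsto-\alpha$ the second kind becomes $\prod 1/(1-\alpha_i-\alpha_j)$ and expands as Schur-positive geometric series, while the signs produced by the numerators are absorbed by the global factor $(-1)^{\codim(\Sigma)}$. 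The only obstruction to manifest positivity is the antisymmetric factor, which is not sign-definite term by term and becomes so only after the full symmetrization (the residue) is carried out. I would also record the transversality obstruction that forces an intrinsic argument here: the symmetric (resp.\ skew) subspace is \emph{not} transversal to the rank strata of $\Hom(\C^n,\C^n)$ (the expected codimension $r^2$ exceeds the actual $\binom{r+1}{2}$ or $\binom{r}{2}$), so one cannot simply pull back the known linear positivity of \cite[Section 1.5]{FR2} via property (iv).

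I see two routes past the antisymmetric obstruction, and I would pursue them in parallel. The first is to absorb $\prod 1/(\alpha_i-\alpha_j)$ into a Jacobi--Trudi--type determinant, turning the symmetrized residue into a single determinant of Schur-positive entries whose expansion signs are then governed transparently by the permutation signs. The second, and I suspect the more conceptual, is to expand first in the $\st_\lambda$ basis rather than the Schur basis: the $\st_\lambda$ expansion of $\ssm(\Sigma)$ is expected to carry a direct geometric meaning (a signed count attached to the strata of $\overline{\Sigma}$), making its alternation almost automatic, after which one transfers to the Schur basis through a transition $\st_\lambda=\sum_\mu d_{\lambda\mu}s_\mu$. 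For that transfer to preserve the sign pattern one needs a Pieri/Littlewood--Richardson-type signed-positivity of the coefficients $d_{\lambda\mu}$. Establishing that transition positivity, rather than the residue bookkeeping itself, is what I expect to be the genuine difficulty; the Giambelli--Thom--Porteous case \cite[Section 1.5]{FR2} serves as a consistency check and a possible template for the combinatorics.
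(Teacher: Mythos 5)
The statement you are trying to prove is not proved in the paper at all: it is stated as an open conjecture, and the authors' only remark about its status is that the general sign results of \cite{AMSSpos} on Segre--MacPherson classes \emph{might} imply it. So there is no proof of record to compare against, and your proposal has to stand on its own. It does not: it is a research plan whose routine steps are correct but whose essential step is missing in both of the routes you offer. The correct parts are the normalization (the involution $\alpha_i\mapsto-\alpha_i$ turning alternation into positivity), the reduction to the stable limit $n=\infty$ via the stabilization property, and the observation that the symmetric/skew subspaces are not transversal to the rank strata of $\Hom(\C^n,\C^n)$, so positivity cannot be pulled back from \cite{FR2} via property (iv). None of these touch the actual difficulty, which you yourself locate in the antisymmetric factors $\prod 1/(\alpha_i-\alpha_j)$ and then do not resolve.

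Route (b) has a concrete flaw beyond being unexecuted. Your premise is that the $\st_\lambda$-expansion of $\ssm(\Sigma)$ should be ``almost automatically'' alternating, and that one then transfers to the Schur basis using signed-positivity of the transition $\st_\lambda=\sum_\mu d_{\lambda\mu}s_\mu$. But the paper's own computations contradict the premise: the $\st_\lambda$-expansions of $\ssm(\Sigma^\wedge_{\infty,i})$ are (conjecturally) \emph{nonnegative}, not alternating, and those of $\ssm(\Sigma^S_{\infty,i})$ have genuinely mixed signs within a single degree (e.g.\ $2\st_3-2\st_{21}+2\st_{111}$ in $\ssm(\Sigma^S_{\infty,1})$, or the term $-4\st_{321}$ in $\ssm(\Sigma^S_{\infty,2})$). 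Moreover, the transfer itself is logically insufficient: the signed-positivity of $d_{\lambda\mu}$ is indeed known (this is exactly the statement, proved in \cite{AMSSpos}, that $\st_\lambda$ is Schur-alternating), and it says the coefficient of $s_\mu$ in $\st_\lambda$ carries the sign $(-1)^{|\mu|-|\lambda|}$. Hence a combination $\sum_\lambda a_\lambda\st_\lambda$ with $a_\lambda\geq 0$ has a sign-coherent Schur expansion only when all contributing $\lambda$ have $|\lambda|$ of one fixed parity; equivalently, what you would need is that the $a_\lambda$ themselves \emph{alternate}, not that they are positive. For $\ssm(\Sigma^\wedge_{\infty,1})$ the partitions $0,1,2,11,3,\dots$ of both parities occur with positive coefficients, so contributions of opposite sign to a single Schur coefficient must cancel, and controlling that cancellation is precisely the original problem. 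In short, route (b) reduces the conjecture to the paper's \emph{other} open conjecture (on $\st_\lambda$-expansions) plus a transfer step that provably does not close the gap.

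Route (a) is only a hope: no Jacobi--Trudi-type determinant is constructed, and a determinant whose entries are Schur-positive does not have a transparently signed Schur expansion --- the alternating sum over permutations reintroduces exactly the sign-mixing you are trying to eliminate. So this route restates the antisymmetrization difficulty rather than resolving it. As it stands, the proposal establishes nothing beyond what the paper already records as observations, and the conjecture remains open; the most promising concrete lead remains the one the authors themselves point to, namely checking whether the general positivity theorem of \cite{AMSSpos} applies to these orbit stratifications.
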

\noindent The sign behavior of SSM classes is determined under very general circumstances in \cite{AMSSpos}. It would be interesting to check whether results in that paper imply our conjecture. 
\item {\bf (lowest degree terms)} The lowest degree terms of both CSM and SSM class is the fundamental class of the closure of the orbit. Hence the expressions above are all of the form $\ssm(\Sigma^\wedge_{n,r})=s_{r-1,r-2,\ldots,1}+h.o.t.$, $\ssm(\Sigma^S_{n,r})=2^{r-1}s_{r,r-1,\ldots,1}+h.o.t.$. 
\item {\bf (normalization)} The additivity property of CSM classes imply that the sum of the SSM classes of all orbits of a representation (with finitely many orbits) is 1---we encourage the reader to verify this property in the above examples. Normalization and positivity properties together indicate a formal similarity between SSM theory and probability theory, cf. \cite[Remark 8.8]{FR2}.
\end{itemize}

Our choice above for expanding in terms of {\em Schur functions}, is essentially due to tradition. Schur functions are the fundamental classes of so-called matrix Schubert varieties. Those varieties are indeed very basic ones, but the choice of choosing their {\em fundamental class} as our basic polynomials might be improved sometimes. It might be more natural that for SSM classes of geometrically relevant varieties the ``right'' choice of expansion is in terms of the {\em SSM classes} of matrix Schubert varieties. These functions, named $\st_\lambda$, are defined and calculated in \cite[Definition 8.2]{FR2}. Moreover, the $\st_\lambda=s_\lambda+h.o.t.$ functions are themselves Schur alternating (conjectured in \cite{FR2}, proved in \cite{AMSSpos}). Remarkably, SSM classes of some quiver loci are proved to be $\st_\lambda$-positive---indicating a two-step positivity structure of SSM classes (see the Introduction of \cite{FR2}). The following are $\st_\lambda$-expansions:

\begin{align*}
\ssm(\Sigma^{\wedge}_{\infty,0})=&
\st_0 + \st_{22} + (\st_{44} + \st_{2222}) + (\st_{66} + \st_{4422} + \st_{222222}) + \dots, \\
\ssm(\Sigma^{\wedge}_{\infty,1})=& \st_0 + \st_1 + (\st_2 + \st_{11}) + (\st_3 + \st_{111}) + (\st_4 + \st_{1111}) + (\st_5 + \st_{11111}) \\ 
& + (\st_6 + \st_{33} + \st_{222} + \st_{111111}) + \dots \\
\ssm(\Sigma^{\wedge}_{\infty,2})=&
\st_1 + (\st_2 + \st_{11}) + (\st_3 + \st_{21} + \st_{111}) + (\st_4 + \st_{31} + \st_{211} + \st_{1111}) \\
& + (\st_5 + \st_{41} +  \st_{32} + \st_{311} + \st_{221} + \st_{2111} + \st_{11111}) + \dots, \\
\end{align*}
\begin{align*}
\ssm(\Sigma^S_{\infty,0}) &= \st_0 - \st_1 + (\st_2 + \st_{11}) - (\st_3 + \st_{21} + \st_{111}) + (\st_4 + \st_{31} + \st_{22} + \st_{211} + \st_{1111}) \\
& - (\st_5 + \st_{41} + \st_{32} + \st_{311} + \st_{221} + \st_{2111} + \st_{11111}) + \dots, \\
\ssm(\Sigma^S_{\infty,1}) &= 2\st_1 + (2\st_3 - 2\st_{21} + 2\st_{111}) + (2\st_5 - 2\st_{41} + 2\st_{32} + 2\st_{311} + 2\st_{221} - 2\st_{2111} + 2\st_{11111}) \\ 
&- 4\st_{321} + \dots, \\
\ssm(\Sigma^S_{\infty,2}) &= 4\st_{21} + (4\st_{41} + 4\st_{2111}) - 4\st_{321} + (4\st_{61} + 4\st_{43} + 4\st_{4111} + 4\st_{2221} + 4\st_{211111}) \\ 
& - (4\st_{521} + 4\st_{32111}) + \dots, \\
\ssm(\Sigma^S_{\infty,3}) &= 8\st_{321} + (8\st_{521} + 8\st_{32111}) + (8\st_{721} + 8\st_{541} - 8\st_{4321} + 8\st_{32221} + 8\st_{3211111}) \\ 
& + (8\st_{921} + 8\st_{741} -8\st_{6321} + 8\st_{543} + 8\st_{33321} - 8\st_{432111} + 8\st_{3222111} + 8\st_{321111111}) + \dots, \\
\ssm(\Sigma^S_{\infty,4}) &= 16\st_{4321} + (16\st_{6321} + 16\st_{432111}) + \dots
\end{align*}

It is worth verifying in these examples the normalization properties 
\[
\sum_i \ssm(\Sigma^S_{\infty,i})=
\sum_i \ssm(\Sigma^\wedge_{\infty,2i})=
\sum_i \ssm(\Sigma^\wedge_{\infty,2i+1})=
\sum_\lambda \st_\lambda=1.
\]

The calculated $\st_\lambda$-expansions (the ones above and many more) display several patterns; let us phrase two of them as conjectures.

\begin{conjecture}
\begin{itemize} 
\item The $\st_\lambda$-expansions of $\ssm(\Sigma^\wedge_{\infty,i})$ and $\ssm(\Sigma^S_{\infty,i})$ are invariant under $\lambda\mapsto \lambda^T(=$the transpose partition, eg. $(6321)^T=432111$). That is, in both expansions the coefficient of $\st_\lambda$ and the coefficient of $\st_{\lambda^T}$ are the same.
\item The $\st_\lambda$-expansion of $\ssm(\Sigma^\wedge_{\infty,i})$ has non-negative coefficients. The $\st_\lambda$-expansion of $\ssm(\Sigma^\wedge_{\infty,2i})$ have alternating coefficients.
\end{itemize}
\end{conjecture}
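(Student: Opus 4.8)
The plan is to treat both assertions as statements about the stable symmetric functions $\ssm(\Sigma^\wedge_{\infty,i}),\ssm(\Sigma^S_{\infty,i})\in\Lambda$ and to extract their $\st_\lambda$-coefficients from an iterated-residue generating function of the type recorded in \cite{P}. The first step is to rewrite transpose-invariance as invariance under the classical involution $\omega\colon\Lambda\to\Lambda$, $\omega(s_\lambda)=s_{\lambda^T}$. Since the $\st_\lambda=s_\lambda+\text{h.o.t.}$ are the SSM classes of matrix Schubert varieties, I would first prove the basis-level identity $\omega(\st_\lambda)=\st_{\lambda^T}$: matrix transposition $\Hom(\C^n,\C^m)\to\Hom(\C^m,\C^n)$ is an isomorphism interchanging the two $\GL$-factors and carrying the matrix Schubert variety indexed by $\lambda$ to the one indexed by $\lambda^T$, which on fundamental classes realizes $s_\lambda\mapsto s_{\lambda^T}$ and, being natural, should upgrade to $\st_\lambda\mapsto\st_{\lambda^T}$ on SSM classes. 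Granting this, the conjectured transpose-symmetry of the $\st$-expansion is exactly $\omega(\ssm(\Sigma_{\infty,i}))=\ssm(\Sigma_{\infty,i})$ for both families.

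To establish this $\omega$-invariance I would work with the iterated-residue presentation of the limits $\ssm(\Sigma_{\infty,i})$ coming from the W-functions of Section~\ref{sec:intform} and \cite{P}. It is worth stressing that $\omega$-invariance is a genuine property of these particular symmetric functions and cannot be read off trivially from the geometry of $\LCn,\SCn$: transposition of forms acts as $\pm\mathrm{id}$ on $\LCn,\SCn$ and hence is invisible on classes. The plan is instead to look for an involution of the residue integrand---a change of the integration variables combined with the functional equations of the kernel factors $(\alpha_i\pm\alpha_j)$ and $(1+\alpha_i+\alpha_j)$---that implements $\omega$ on the completed ring $\Lambda$ and visibly fixes the generating series. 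I expect this to be the most delicate point, since $\omega$ is not a finite-variable substitution and its interaction with the residue must be tracked at the level of the series; a conceptual alternative worth pursuing is whether the symmetry is a shadow of a Howe-type duality pairing the symmetric ($O$) and skew ($\Sp$) situations.

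For the sign assertions I would exploit that the interpolation formula of Theorem~\ref{localization thm} is ``one summation shorter'' and has manifestly integral coefficients, making it better suited than the sieve formula for reading off signs. Non-negativity of the $\st$-expansion of $\ssm(\Sigma^\wedge_{\infty,i})$ I would attempt either by giving a combinatorial model for the $\st$-coefficients directly from the residue expansion---reading them as counts that a cancellation argument renders non-negative---or by reducing to the already-established $\st$-positivity of SSM classes of quiver loci and of the $\st_\lambda$ themselves \cite{AMSSpos,FR2}, realizing $\overline{\Sigma}^\wedge_{\infty,i}$ as a transversal section of such a locus so that positivity is inherited via property (iv) of Section~\ref{sec:eqCSM}. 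The symmetric family obeys a more intricate sign rule---for instance $\ssm(\Sigma^S_{\infty,0})$ alternates by total degree---so here I would first pin down the precise rule and then trace its parity behaviour to the parity-dependent sieve coefficients, namely the Euler numbers $E_{2i}$ of \eqref{eq:ssmPhi} in the skew case and the signed binomials $(-1)^i\binom{r+i}{r}$ of Theorem~\ref{Thm ssm = sum of phi sym} in the symmetric case, after which one isolates a global sign and reduces to a non-negativity statement of the previous type.

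The principal obstacle, common to both parts, is the change of basis. The W-functions and $\Phi$-classes are produced in a Schur-type (localization) form, whereas the conjecture concerns the $\st_\lambda$-expansion; because $\st_\lambda=s_\lambda+\text{h.o.t.}$, passing from Schur coefficients to $\st$-coefficients is a non-trivial unitriangular transformation, and both the transpose symmetry and the positivity become visible only after it. I would therefore expect the decisive technical input to be an explicit---and preferably manifestly positive and $\omega$-equivariant---description of the $\st_\lambda$-expansion coefficients, most plausibly packaged as a single iterated residue, after which both bullets of the conjecture should follow from the evident symmetry and positivity of that description.
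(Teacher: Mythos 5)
First, a point of calibration: the statement you were asked to prove is stated in the paper as a \emph{conjecture}. The authors give no proof of it --- their only evidence is the list of computed $\st_\lambda$-expansions preceding the statement --- so there is no argument of theirs to compare yours against; the only question is whether your proposal itself closes the gap. It does not. What you have written is a research outline in which every load-bearing step is deferred rather than carried out, as you yourself flag (``I expect this to be the most delicate point'', ``I would first pin down the precise rule'', the ``decisive technical input'' is something you ``would expect'' to exist).

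The concrete gaps are these. (a) The basis-level identity $\omega(\st_\lambda)=\st_{\lambda^T}$ is asserted, not proved, and the transposition heuristic does not obviously yield it: transposition $\Hom(\C^n,\C^m)\to\Hom(\C^m,\C^n)$ swaps the two $\GL$ factors and acts on the equivariant variables by relabelling/dualization, whereas $\omega$ is not induced by any substitution of finitely many Chern roots; naturality of SSM classes under an equivariant isomorphism only produces identities after such substitutions, so the ``upgrade'' from fundamental classes to SSM classes is precisely the point that needs proof. (b) The involution of the iterated-residue integrand that would implement $\omega$ and fix the series is hypothesized but never constructed --- and this is the entire content of the first bullet. (c) For positivity you propose to realize $\overline{\Sigma}^\wedge_{\infty,i}$ as a transversal pullback of a quiver locus whose SSM class is known to be $\st_\lambda$-positive, so that property (iv) of Section \ref{sec:eqCSM} transfers positivity; but no such realization is given, and none is available in the paper --- the skew-symmetric orbits sit in the $\GL_n$-representation $\LCn$, which is not a quiver setting, and the cited positivity results (\cite{AMSSpos}, \cite{FR2}) concern Schur-sign behaviour and the $\st_\lambda$ themselves, not $\st_\lambda$-expansions of these orbits. (d) For the symmetric family even the statement of the sign rule is left open, and the paper's own data shows it cannot be governed by total degree: for instance $\ssm(\Sigma^S_{\infty,3})$ contains both $+8\st_{721}$ and $-8\st_{4321}$, two terms of the same degree. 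So after your proposal the statement remains exactly what it is in the paper: a conjecture supported by computation, now accompanied by a plausible but unexecuted plan of attack.
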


\section{Applications}\label{appl}

We will apply the calculated CSM classes to find the Euler characteristics of general linear sections of the projectivizations of $\Sigma^\wedge_{n,r}$, $\Sigma^S_{n,r}$. First we study the relation between characteristic classes in vector spaces and in projective spaces.

\subsection{Characteristic classes before vs after projectivization} 
Consider the algebraic representation $G\acts V=\C^N$, with $T=(\C^*)^m\leq G$ the maximal torus, and weights $\sigma_j$. Suppose the representation contains the scalars, that is, there is a map $\phi:\C^*\to T$, $\phi(s)=(s^{w_1},s^{w_2},\ldots,s^{w_m})$ such that $\phi(s)$ acts on $V$ with multiplication by $s^w$ ($w\not=0$). Then the $G$-invariant subsets $\Sigma\subset V$ are necessarily cones. 

We want to compare the $G$-equivariant characteristic classes of $\Sigma \subset V$ with those of $\PS\subset \PV$. The first one lives in the ring $H_G^*(V)=H^*(BG)$, while the second one lives in
\begin{equation}\label{Hproj}
H_G^*(\PV)=H^*_G(\PV)=H^*(BG)[\xi]\ /\ \textstyle{\prod_j} (\xi-\sigma_j),
\end{equation}
where $\xi$ is the first Chern class of the $G$-equivariant tautological line bundle over $\PV$. Here $H^*(BG)$ is a subring of $H^*(BT)=\Q[\alpha_1,\ldots,\alpha_m]$, and the weights $\sigma_j$ are linear combinations of the $\alpha_i$'s.

\begin{theorem}
The substitutions
\[
[\Sigma]|_{\alpha_i\mapsto \alpha_i+\frac{w_i}{w}\xi},\qquad\qquad
\ssm(\Sigma)|_{\alpha_i\mapsto \alpha_i+\frac{w_i}{w}\xi}
\]
represent $[\PS\subset \PV]$ and $\ssm(\PS\subset \PV)$, respectively, in \eqref{Hproj}. 
\end{theorem}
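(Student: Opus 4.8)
The plan is to exploit the relationship between equivariant characteristic classes on a vector space and on its projectivization, which is governed by the action of the scalars and the structure of $H^*_G(\PV)$ as described in \eqref{Hproj}. The key geometric observation is that the projectivization map $p:V\setminus\{0\}\to \PV$ exhibits $V\setminus\{0\}$ as the total space of the $G$-equivariant tautological line bundle with the zero section removed, or more usefully, that the characteristic classes on $\PV$ can be computed by pulling back from $V$ along an appropriate relationship between the tautological line bundle class $\xi$ and the Chern roots $\alpha_i$. First I would set up the comparison precisely: the substitution $\alpha_i\mapsto \alpha_i+\frac{w_i}{w}\xi$ encodes exactly how the $G$-equivariant Chern roots of $V$, viewed on $\PV$, are twisted by the tautological line bundle once we account for the scalar action with weights $w_i$ and total weight $w$.

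The main steps, in order, are as follows. First I would establish the statement for the fundamental class $[\Sigma]$, where the machinery is cleaner: since $\Sigma$ is a cone, $\PS=p(\Sigma\setminus\{0\})$ is well-defined, and the fundamental class behaves functorially under the projectivization. The substitution should follow from tracking how the Chern roots of the tautological bundles restrict under the inclusion of $\PV$ into an appropriate Borel construction, where the scalar weights $w_i/w$ appear as the coefficients relating $\xi$ to the original equivariant parameters. Second, for the SSM class, I would invoke the transversality/pull-back property (property (iv) in Section~\ref{sec:eqCSM}): the SSM class is compatible with transversal intersection and pull-back, which is precisely what a projectivization (restricting to a hyperplane-like section fiberwise) should respect. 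The idea is to realize $\PS\subset\PV$ as obtained from $\Sigma\subset V$ via a map to which property (iv) applies, so that $\ssm(\PS)$ is the pull-back $\eta^*(\ssm(\Sigma))$ under the map that induces the substitution $\alpha_i\mapsto\alpha_i+\frac{w_i}{w}\xi$ on cohomology.

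The hard part will be making rigorous the claim that the ring map $H^*(BG)\to H^*_G(\PV)$ induced by the geometry of projectivization is exactly given by the stated substitution, and in particular checking that it is well-defined modulo the relation $\prod_j(\xi-\sigma_j)=0$ in \eqref{Hproj}. This requires identifying $\xi$ correctly as the equivariant first Chern class of the tautological line bundle and verifying that the scalar $\C^*$ acting with weight $w$ produces the normalizing denominator $w$ in $\frac{w_i}{w}\xi$; the coefficients $w_i/w$ must be consistent across all $i$ so that the substitution descends to the symmetric-function subring $H^*(BG)\subset H^*(BT)$. I expect the cleanest route to be a direct computation via equivariant localization, or alternatively by comparing the two sides on each fixed point of the torus action on $\PV$, where both the fundamental class and the SSM class are determined by their restrictions.

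Once the statement is verified for $[\Sigma]$, the SSM case should follow by the additivity and functoriality of SSM classes together with the fact (property (v)) that SSM and CSM classes are graded deformations whose behavior is controlled degree-by-degree by the same substitution, since the substitution $\alpha_i\mapsto\alpha_i+\frac{w_i}{w}\xi$ is a ring homomorphism and hence commutes with the division by $c(TV)$ defining SSM classes.
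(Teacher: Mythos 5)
Your core strategy is the paper's: the paper proves this theorem by citing \cite[Theorem 6.1]{FNR} for the fundamental-class statement and then observing that the proof given there uses only the pull-back property, which $\ssm$ shares with the fundamental class (property (iv) of Section \ref{sec:eqCSM}); your second paragraph proposes exactly this. However, there are two genuine gaps. The first is that the crucial construction --- an actual geometric realization of $\PS\subset\PV$ as a transversal pull-back of $\Sigma\subset V$ inducing $\alpha_i\mapsto\alpha_i+\frac{w_i}{w}\xi$ --- is never produced; you label it ``the hard part'' and propose equivariant localization as a fallback, but that route is circular: to compare the two sides at the torus fixed points of $\PV$ you would need to know the fixed-point restrictions of $\ssm(\PS\subset \PV)$, which is precisely the unknown being computed. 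The construction that works (and underlies \cite{FNR}) is a change-of-groups argument: because the scalar $\C^*$ acts freely on $V\setminus\{0\}$ with quotient $\PV$, one has $H^*_G(\PV)\cong H^*_{G\times\C^*}(V\setminus\{0\})$, with the extra equivariant parameter identified with $\pm\xi$; after replacing the scalar $\C^*$ by its $w$-fold cover (harmless with $\Q$ coefficients), the $(G\times\C^*)$-action on $V$ factors through $G$ via $(g,t)\mapsto g\,\phi(t)$, and the induced map on Chern roots is exactly the stated substitution. The theorem then follows by chaining three pull-backs --- the change of groups, the open inclusion $V\setminus\{0\}\hookrightarrow V$ (an instance of property (iv)), and the free-quotient identification --- each of which is available for $\ssm$ because $\ssm$ satisfies pull-back. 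This also disposes of your well-definedness worry: the substitution is induced by a group homomorphism, so it automatically preserves Weyl-invariance and respects the relation in \eqref{Hproj}.

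The second gap is your closing paragraph, which is a non sequitur: the SSM statement cannot be deduced from the fundamental-class statement ``by additivity and functoriality together with property (v).'' Property (v) controls only the lowest-degree term of the CSM class; it gives no information about the higher-degree terms, which carry all the content of $\ssm(\Sigma)$ beyond $[\Sigma]$, and additivity does not bridge that gap. The correct logic is the reverse of what you wrote, and it is the one-line content of the paper's proof: one does not upgrade the conclusion for $[\Sigma]$ to a conclusion for $\ssm(\Sigma)$; rather, one reruns the entire proof of the fundamental-class case, checks that every step is a pull-back, and then invokes the fact that $\ssm$ obeys the same pull-back property. Your second paragraph should be promoted to the actual argument (with the map constructed as above), and your final paragraph deleted.
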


The first statement is \cite[Theorem 6.1]{FNR}, and the proof there holds for SSM classes as well, because the proof given there only uses the pull-back property which $[\ \ ]$ shares with SSM classes.

The non-equivariant (``ordinary'') characteristic classes are always obtained from the equivariant ones by substituting 0 in the equivariant variables. Therefore, the non-equivariant SSM class $\ssmz(\PS\subset \PV)$ of $\PS$ living in $H^*(\PV)=\Q[\xi]/\xi^{N}$ is obtained as
\[
\ssmz(\PS\subset \PV)=\ssm(\Sigma\subset V)|_{\alpha_i\mapsto \frac{w_i}{w}\xi}.
\]
Remarkably, the same holds for CSM classes too: $\csmz(\PS\subset \PV)=\csm(\Sigma\subset V)|_{\alpha_i\mapsto \frac{w_i}{w}\xi}$, which follows from the calculation 
\begin{align*}
\csmz(\PS) &= \ssmz(\PS) c(\PV) = \ssm(\Sigma)|_{\alpha_i\mapsto \frac{w_i}{w}\xi} \cdot (1+\xi)^N \\
&= \left.\frac{\csm(\Sigma)}{\prod_j(1+\sigma_j)}\right|_{\alpha_i \mapsto \frac{w_i}{w}\xi} \cdot (1+\xi)^N 
= \csm(\Sigma)|_{\alpha_i\mapsto \frac{w_i}{w}\xi}, 
\end{align*}
where the last equality used the defining property of $w_i$, $w$, namely: $\left. 1 + \sigma_j \right|_{\alpha_i \mapsto \frac{w_i}{w} \xi} = 1 + \xi$ %$\sigma_j|_{\alpha_i \mapsto \frac{w_i}{w}\xi}=1+\xi$ 
(for all $j$).

\subsection{Non-equivariant CSM classes of symmetric and skew-symmetric determinantal varieties}\label{sec:noneq}
Let us study the projectivizations of $\Sigma^\wedge_{n,r}$ and $\Sigma^S_{n,r}$. Some of these projective varieties are well known: $\PP\Sigma^\wedge_{n,n-2}$ is the Pl\"ucker embedding of $\Gr_2\C^n$, and $\PP\Sigma^S_{n,n-1}$ is the Veronese embedding of $\PP\C^n$. Applying the result of the preceding section to $\LCn$ and $\SCn$ we find that the ordinary CSM classes of the orbits are obtained by
\begin{align*}
\csmz(\PP\Sigma^\wedge_{n,r} \subset \PP\LCn) & = \csm(\Sigma^\wedge_{n,r} \subset \LCn)|_{\alpha_i\mapsto \xi/2}, \\
\csmz(\PP\Sigma^S_{n,r} \subset \PP\SCn) & = \csm(\Sigma^S_{n,r} \subset \SCn)|_{\alpha_i\mapsto \xi/2}.
\end{align*}

For example from the explicit formula for $\csm(\Sigma^S_{3,r}\subset S^2\!\C^3)$ given in Definition \ref{def local symm weight fcn}, after substituting $\alpha_i=\xi_i/2$ for all $i$ we obtain
\begin{align}\label{exS3}
\notag \csmz(\PP\Sigma^S_{3,0}) = & 1+ 3\xi + 6\xi^2+\ \ 6\xi^3+3\xi^4, \\
          \csmz(\PP\Sigma^S_{3,1}) = & \hskip .75 true cm  3\xi+9\xi^2+10\xi^3+6\xi^4+3\xi^5, \\
\notag \csmz(\PP\Sigma^S_{3,2}) = & \hskip 3 true cm 4\xi^3+6\xi^4+3\xi^5.
\end{align}
As we know, the degrees of the lowest degree terms above, namely $0, 1, 3$, are the codimensions of the given orbits. The coefficients of the lowest degree terms, namely $1, 3, 4$, are the degrees of the closures of those orbits. The integral, ie. the coefficients of $\xi^5$, namely $0, 3, 3$ are the Euler charactersitics of the orbits. Observe also, that the sum of the three classes above are $1+6\xi+15\xi^2+20\xi^3+15\xi^4+6\xi^5=c(T\PP^5)=(1+\xi)^6\in \Q[\xi]/(\xi^6)$.

The codimensions, the degrees, and the Euler characteristics of the orbits of $\PP\LCn$ and $\PP\SCn$ are 
\begin{align*}
\codim(\PP\Sigma^\wedge_{n,r}\subset \PP\LCn)=\binom{r}{2},& \qquad
\codim(\PP\Sigma^S_{n,r}\subset \PP\SCn)=\binom{r+1}{2},
\\
\deg(\PP\Sigma^\wedge_{n,r}\subset \PP\LCn)=\frac{1}{2^{r-1}}\prod_{i=0}^{r-2}\frac{\binom{n+i}{r-1-i}}{\binom{2i+1}{i}},
& \qquad
\deg(\PP\Sigma^S_{n,r}\subset \PP\SCn)=\prod_{i=0}^{r-1}\frac{\binom{n+i}{r-i}}{\binom{2i+1}{i}},
\\
\chi(\PP\Sigma^\wedge_{n,r}) = \begin{cases} \binom{n}{2}  & \text{for } \ \ r=n-2 \\
0 & \text{for } \ \ r<n-2, 
\end{cases}
& \qquad
\chi(\PP\Sigma^S_{n,r}) = \begin{cases} n & \text{for } \ \ r=n-1 \\
\binom{n}{2} & \text{for } \ \ r=n-2 \\
0 & \text{for } \ \ r<n-2.
\end{cases}
\end{align*}
These formulas are well known, and can be found either by classical methods or using our formula, see some details in \cite{P}. The question is: what geometric information is carried by the `middle' terms of the $\csmz$ classes like~\eqref{exS3}. This will be answered in the next section.

\subsection{Euler characteristics of general linear sections}
Let $X\subset \PP^N$ be a locally  closed set, and let $X_r=X\cap H_1 \cap \ldots \cap H_r$ be the intersection with $r$ general hyperplanes. Following \cite{Al} define the {\em Euler characteristic polynomial of $X$} to be
\[
\chi_X(t)=\sum_{i=0}^N \chi(X_i)(-t)^i.
\]
From the non-equivariant CSM class of $X$, $\csmz(X\subset \PP^N)=\sum_{i=0}^N a_i \xi^i$ define $\gamma_X(t)=\sum_{i=0}^N a_i t^{N-i}$.
Aluffi showed that the two polynomials $\chi_X$ and $\gamma_X$ are related as follows. For a polynomial $p(t)$ define 
\[
\J(p)(t) =\frac{t p(-t-1) + p(0)}{t+1}.
\]
The operation $\J$ is a degree-preserving linear involution on polynomials in $t$.

\begin{theorem}\cite[Theorem 1.1]{Al} \label{aluffi}
For every locally closed subset $X$ of $\mathbb{CP}^N$, we have
\[
\mathcal{J}(\chi_X(t)) = \gamma_X(t) \ \ \ \ \text{and} \ \ \ \ \mathcal{J}(\gamma_X(t)) = \chi_X(t).
\]
\end{theorem}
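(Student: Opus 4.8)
The plan is to compute each sectional Euler characteristic $\chi(X_i)$ directly from the CSM class, assemble the generating function $\chi_X(t)$ into closed form, and recognize that closed form as $\J(\gamma_X)$; the companion identity $\J(\chi_X)=\gamma_X$ then follows immediately because $\J$ is an involution. Throughout write $P(\xi)=\csmz(X\subset\PP^N)=\sum_{i=0}^N a_i\xi^i\in\Q[\xi]/(\xi^{N+1})$, so that by definition $\gamma_X(t)=\sum_i a_i t^{N-i}=t^N P(1/t)$, and by property (v) its constant term is $\gamma_X(0)=a_N=\chi(X)$.

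First I would express $\chi(X_i)$ through $P$. Let $L\cong\PP^{N-i}$ be a general linear subspace of codimension $i$, with inclusion $\iota:L\hookrightarrow\PP^N$. By Kleiman transversality a generic such $L$ meets the strata of $\overline{X}$ transversally, so the non-equivariant analogue of the pull-back property (iv) gives $\ssmz(X_i\subset L)=\iota^*\ssmz(X\subset\PP^N)$. Since $\iota^*\xi=\xi$, $c(TL)=(1+\xi)^{N-i+1}$, and $\ssmz(X\subset\PP^N)=P(\xi)/(1+\xi)^{N+1}$, multiplying back by $c(TL)$ and integrating (property (v)) yields
\[
\chi(X_i)=\int_{\PP^{N-i}}\frac{P(\xi)}{(1+\xi)^i}=[\xi^{N-i}]\,\frac{P(\xi)}{(1+\xi)^i}.
\]

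Next I would feed this into the generating function. Using $[\xi^{N-i}]g=[\xi^N]\,\xi^i g$ and then summing the geometric series in $i$ — legitimate because $P$ has degree at most $N$, so the terms with $i>N$ contribute nothing to $[\xi^N]$ — I expect
\[
\chi_X(t)=\sum_{i=0}^N(-t)^i[\xi^N]\frac{\xi^i P(\xi)}{(1+\xi)^i}=[\xi^N]\!\left(P(\xi)\,\frac{1+\xi}{1+(1+t)\xi}\right).
\]
Splitting the kernel as $\frac{1+\xi}{1+(1+t)\xi}=\frac{1}{1+t}+\frac{t}{1+t}\cdot\frac{1}{1+(1+t)\xi}$, then using $[\xi^N]P=\chi(X)$ and $[\xi^N]\frac{P(\xi)}{1+(1+t)\xi}=(-1)^N(1+t)^N P\!\left(\tfrac{-1}{1+t}\right)$, this collapses to
\[
\chi_X(t)=\frac{\chi(X)+t\,(-1)^N(1+t)^N P\!\left(\tfrac{-1}{1+t}\right)}{1+t}.
\]

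Finally I would match this with $\J(\gamma_X)$. From $\gamma_X(t)=t^N P(1/t)$ one reads off $\gamma_X(-t-1)=(-1)^N(t+1)^N P\!\left(\tfrac{-1}{t+1}\right)$ and $\gamma_X(0)=\chi(X)$, so the last display is exactly $\J(\gamma_X)(t)=\frac{t\,\gamma_X(-t-1)+\gamma_X(0)}{t+1}$. This proves $\chi_X=\J(\gamma_X)$, and applying $\J$ to both sides gives $\J(\chi_X)=\gamma_X$. I expect the genuine obstacle to be the first step rather than the algebra: one must invoke generic transversality of $L$ to the stratification of $\overline{X}$ so that the pull-back property applies, and then track the tangent-bundle factors correctly, since it is precisely the cancellation of $(1+\xi)^{N+1}$ in $\ssmz$ against $c(T\PP^{N-i})=(1+\xi)^{N-i+1}$ that leaves the single kernel $(1+\xi)^{-i}$ driving the entire computation. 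The remaining manipulation is a formal coefficient-extraction whose only delicate point, the extension of the geometric series to infinity, is justified by $\deg P\le N$.
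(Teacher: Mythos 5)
Your proposal is correct, but note that the paper itself contains no proof of this statement: it is imported verbatim from Aluffi, with the citation \cite[Theorem 1.1]{Al} standing in for an argument. So the only meaningful comparison is with Aluffi's original proof, and your proposal is essentially a correct, self-contained reconstruction of that route: general-section Euler characteristics extracted from the CSM class via transversal pull-back, followed by generating-function algebra. Both pillars check out. For (a), $\chi(X_i)=[\xi^{N-i}]\,P(\xi)/(1+\xi)^i$ does follow from Kleiman-generic transversality of $L\cong\PP^{N-i}$, the pull-back property of SSM classes, and the integration property (v). For (b), the extension of the sum to an infinite geometric series is valid because every term with $i>N$ has $\xi$-order greater than $N$; the kernel $\frac{1+\xi}{1+(1+t)\xi}$, its splitting, the evaluation $[\xi^N]\frac{P(\xi)}{1+(1+t)\xi}=(-1)^N(1+t)^N P\left(\frac{-1}{1+t}\right)$, and the identification of the result with $\frac{t\,\gamma_X(-t-1)+\gamma_X(0)}{t+1}$ are all correct (I also spot-checked $X=\PP^1$, where both sides give $2-t$). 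Two points should be made explicit to close the argument. First, property (iv) is stated for \emph{closed} invariant subvarieties, while $X$ is only locally closed: write $\One_X=\One_{\overline{X}}-\One_{\overline{X}\setminus X}$, choose a Whitney stratification of $\overline{X}$ in which $\overline{X}\setminus X$ is a union of strata, apply the pull-back property to both closed sets, and conclude by additivity. Second, your deduction of $\J(\chi_X)=\gamma_X$ from $\chi_X=\J(\gamma_X)$ uses that $\J$ is an involution; the paper asserts this without proof, so include the one-line verification: $\J(p)(0)=p(0)$, hence
\[
\J(\J(p))(t)=\frac{t\,\J(p)(-t-1)+\J(p)(0)}{t+1}
=\frac{-\bigl((-t-1)p(t)+p(0)\bigr)+p(0)}{t+1}=p(t).
\]
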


Putting our results together with Theorem \ref{aluffi} we have an algorithm to find the Euler characteristics of general linear sections of the orbits of $\PP\Sigma^\wedge_{n,r}$, $\PP\Sigma^S_{n,r}$. Namely: Formulas for $\GL_n(\C)$-equivariant CSM classes of  $\Sigma^\wedge_{n,r}$, $\Sigma^S_{n,r}$ are given in Sections \ref{sec:sieveform} and \ref{sec:intform}. Those formulas turn to formulas for non-equivariant CSM classes for $\PP\Sigma^\wedge_{n,r}$, $\PP\Sigma^S_{n,r}$ in Section~\ref{sec:noneq}. According to Theorem \ref{aluffi}, the coefficients of the $\J$-operation of those non-equivariant CSM classes are the Euler characteristics of the general linear sections.

\begin{example} \rm
From the calculations in \eqref{exS3} we get 
\begin{align*}
\gamma_{\mathbb{P}\Sigma^{S}_{3,0}}(t) &= 3t+6t^2+6t^3+3t^4+t^5, \\
\gamma_{\mathbb{P}\Sigma^{S}_{3,1}}(t) &= 3+6t+10t^2+9t^3+3t^4, \\
\gamma_{\mathbb{P}\Sigma^{S}_{3,2}}(t) &= 3+6t+4t^2.
\end{align*}
After applying the involution $\mathcal{J}$ we get the Euler characteristic polynomials
\begin{align*}
\chi_{\mathbb{P}\Sigma^{S}_{3,0}}(t) &= (-t)-(-t)^2+3(-t)^3-(-t)^4+(-t)^5, \\
\chi_{\mathbb{P}\Sigma^{S}_{3,1}}(t) &= 3+2(-t)+(-t)^2+3(-t)^4, \\
\chi_{\mathbb{P}\Sigma^{S}_{3,2}}(t) &= 3+2(-t)+4(-t)^2,
\end{align*}
and the Euler characteristics of Table \ref{table1}.  
Similar calculation yields e.g. the Euler characteristics presented in Table \ref{table2}. 

It is worth verifying that the sum of columns (in both tables) is the Euler characteristic of the appropriate projective linear space.

\begin{table}[h!]
\centering
\begin{tabular}{ |c|c|c|c|c|c|c| } 
 \hline $X$ & $\chi(X)$ & $\chi(X_1)$ &$\chi(X_2)$ & $\chi(X_3)$ & $\chi(X_4)$ & $\chi(X_5)$ \\ 
 \hline $\mathbb{P}\Sigma^{S}_{3,0}$ & 0 & 1 & -1 & 3 & -1 & 1 \\ 
 \hline $\mathbb{P}\Sigma^{S}_{3,1}$ & 3 & 2 & 1 & 0 & 3 & 0 \\
 \hline $\mathbb{P}\Sigma^{S}_{3,2}$ & 3 & 2 & 4 & 0 & 0 & 0 \\
 \hline
\end{tabular}
\caption{Euler characteristics of general linear sections of the orbits in $\PP S^2\!\C^3$}\label{table1}
\end{table}

\begin{table}[h!]
\centering
\begin{tabular}{ |c|c|c|c|c|c|c|c|c| } 
 \hline $X$ & $\chi(X)$ & $\chi(X_1)$ &$\chi(X_2)$ & $\chi(X_3)$ & $\chi(X_4)$ & $\chi(X_5)$ & $\chi(X_6)$ & $\chi(X_7)$ \\ 
 \hline $\mathbb{P}\Sigma^{\wedge}_{6,0}$ & 0&-1&1&-3&5&-11&21&-29 \\ 
 \hline $\mathbb{P}\Sigma^{\wedge}_{6,2}$ & 0&3&0&9&-6&27&-36&51 \\
 \hline $\mathbb{P}\Sigma^{\wedge}_{6,4}$ & 15&12&12&6&12&-6&24&-14 \\
 \hline
\end{tabular}

\smallskip

\begin{tabular}{ |c|c|c|c|c|c|c|c| } 
 \hline $X$ & $\chi(X_8)$ & $\chi(X_9)$ &$\chi(X_{10})$ & $\chi(X_{11})$ & $\chi(X_{12})$ & $\chi(X_{13})$ & $\chi(X_{14})$  \\ 
 \hline $\mathbb{P}\Sigma^{\wedge}_{6,0}$ & 29&-21&11&-5&3&-1&1 \\ 
 \hline $\mathbb{P}\Sigma^{\wedge}_{6,2}$ & -36&27&-6&9&0&3&0 \\
 \hline $\mathbb{P}\Sigma^{\wedge}_{6,4}$ & 14&0&0&0&0&0&0 \\
 \hline
\end{tabular}
\caption{Euler characteristics of general linear sections of the orbits in $\PP\Lambda^2\!\C^6$} \label{table2}
\end{table}
\end{example}

\section{Future directions}\label{future}

\subsection{Chern-Mather classes}
Another reason for studying CSM classes of singular varieties is the relation with their Chern-Mather classes. For the role of Chern-Mather classes in geometry see the recent paper \cite{AMather} and references therein. One approach to Chern-Mather classes is the construction called Nash blow-up, another one is the natural transformation $C_*:\mathcal F^G(-)\to H_*^G(-)$ mentioned in Remark \ref{OhmotosDef}. As we know, the (homology) CSM class of a closed subvariety $W$ is $C_*(\One_W)$. The Chern-Mather class $\cM(W)$ of $W$ is the $C_*$-image of another remarkable constructible function, the so-called local Euler obstruction function, $\Eu_W$. Hence if $\Eu_W$ can be calculated, ie. expressed as a linear combination of $\One_{V_i}$'s (for locally closed set $V_i$), then the same linear relation holds among $\cM(W)$ and the CSM classes of $V_i$'s. Arguments along these lines are carried out in \cite{P} resulting the following theorem.

\begin{theorem}{\rm \cite[Thms 9.11, 9.18]{P}}
For $0 \leq r \leq n$ we have
\[
\Eu_{\overline{\Sigma}^{\wedge}_{n,r}} = \sum_{k=0}^{\frac{n-r}{2}} \binom{\lfloor \frac{r}{2} \rfloor + k}{\lfloor \frac{r}{2} \rfloor} \One_{\Sigma^{\wedge}_{n,r+2k}},
\quad\text{and hence}\quad
\cM(\overline{\Sigma}^{\wedge}_{n,r}) = \sum_{k=0}^{\frac{n-r}{2}} \binom{\lfloor \frac{r}{2} \rfloor + k}{\lfloor \frac{r}{2} \rfloor} \csm(\Sigma^{\wedge}_{n,r+2k}).
\]
\end{theorem}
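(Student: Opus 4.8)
The plan is to reduce the entire statement to a computation of local Euler obstruction values, since the Chern--Mather identity follows from the Euler obstruction identity by applying MacPherson's natural transformation $C_*$ of Remark \ref{OhmotosDef}. By construction $\cM(\overline{\Sigma}^\wedge_{n,r})=C_*(\Eu_{\overline{\Sigma}^\wedge_{n,r}})$ and $\csm(\Sigma^\wedge_{n,r+2k})=C_*(\One_{\Sigma^\wedge_{n,r+2k}})$. Hence, once the constructible-function identity
\[
\Eu_{\overline{\Sigma}^\wedge_{n,r}}=\sum_{k=0}^{(n-r)/2}\binom{\lfloor r/2\rfloor+k}{\lfloor r/2\rfloor}\One_{\Sigma^\wedge_{n,r+2k}}
\]
is established, the equivariant additivity \eqref{csm property 1} of $C_*$ together with the push-forward and Poincar\'e-duality passage to cohomology of Remark \ref{OhmotosDef} yields the asserted formula for $\cM$ term by term. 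Thus the whole content is to determine the value of the $\GL_n(\C)$-invariant (hence orbit-constant) function $\Eu_{\overline{\Sigma}^\wedge_{n,r}}$ on the stratum $\Sigma^\wedge_{n,r+2k}$.

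First I would pass from an orbit value to a vertex value of a smaller model. The local Euler obstruction is a local analytic invariant, is multiplicative by a smooth factor, and is preserved under transversal slicing. Near a corank-$m$ point $y\in\Sigma^\wedge_{n,m}$ (with $m=r+2k$), in coordinates adapted to the orbit through $y$, the pair $(\LCn,\overline{\Sigma}^\wedge_{n,r})$ is the product of a smooth factor along the orbit directions with the transversal slice $(\LCm,\overline{\Sigma}^\wedge_{m,r})$, the corank-$m$ point corresponding to the origin $0\in\LCm$; this is the statement that the normal structure of the degeneracy locus at a corank-$m$ matrix is again a skew-symmetric degeneracy problem of size $m$. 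Consequently
\[
\Eu_{\overline{\Sigma}^\wedge_{n,r}}\big|_{\Sigma^\wedge_{n,m}}=\Eu_{\overline{\Sigma}^\wedge_{m,r}}(0),\qquad m=r+2k,
\]
and the problem is reduced to the Euler obstruction at the vertex of the cone $\overline{\Sigma}^\wedge_{m,r}\subset\LCm$. The case $k=0$ is then immediate: there $m=r$, so $\overline{\Sigma}^\wedge_{r,r}=\{0\}$ is a reduced point with $\Eu_{\{0\}}(0)=1=\binom{\lfloor r/2\rfloor}{\lfloor r/2\rfloor}$, which also matches the value $1$ of the Euler obstruction along the smooth open orbit $\Sigma^\wedge_{n,r}$.

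The crux is the vertex computation $\Eu_{\overline{\Sigma}^\wedge_{m,r}}(0)=\binom{\lfloor r/2\rfloor+k}{\lfloor r/2\rfloor}$ for $m=r+2k>r$. I would carry this out through the Nash blow-up $\nu\colon\widehat{W}\to W:=\overline{\Sigma}^\wedge_{m,r}$ and the Gonz\'alez-Sprinberg--Verdier expression of $\Eu_W(0)$ as the intersection of the total Chern class of the tautological Nash bundle against the Segre class of the fiber $\nu^{-1}(0)$. Since the conormal (hence Nash) geometry of skew-symmetric determinantal loci is classical and $\GL$-homogeneous, the fiber $\nu^{-1}(0)$ and the restriction of the Nash bundle are explicit homogeneous spaces, and the obstruction becomes a single combinatorial intersection number; alternatively one may dominate $\nu$ by the Grassmann resolution $\widetilde{\Sigma}^\wedge_{m,r}$ of Section \ref{fibred}, whose fibers are understood, and transport the computation there. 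A likely more efficient route is an induction on $k$ via a standard formula (L\^e--Teissier, or the complex-link/nearby-section description) that relates the vertex obstruction to Euler-characteristic data of general sections of the homogeneous cone, whose unique solution with initial value $1$ is the asserted binomial coefficient. The main obstacle is precisely this one homogeneous evaluation---identifying the relevant Nash or section data and computing the resulting intersection number (equivalently, solving the recursion); the two reductions that bracket it, and the final application of $C_*$, are formal.
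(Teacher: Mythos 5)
Your formal bracketing steps are correct, and they coincide with how the paper frames the result: the passage from the $\Eu$ identity to the $\cM$ identity via MacPherson's transformation $C_*$ is precisely the paper's argument (``if $\Eu_W$ can be calculated \dots then the same linear relation holds among $\cM(W)$ and the CSM classes''), and your reduction of the orbit value of $\Eu_{\overline{\Sigma}^{\wedge}_{n,r}}$ on $\Sigma^{\wedge}_{n,r+2k}$ to the vertex value $\Eu_{\overline{\Sigma}^{\wedge}_{m,r}}(0)$, $m=r+2k$, is legitimate: near a corank-$m$ matrix a Schur-complement change of coordinates exhibits $\overline{\Sigma}^{\wedge}_{n,r}$ locally as a smooth factor times $\overline{\Sigma}^{\wedge}_{m,r}\subset\LCm$, and $\Eu$ is local, constant on orbits, and unchanged by smooth factors. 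The $k=0$ check is also fine.

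The gap is that the actual content of the theorem is never proved: you do not compute $\Eu_{\overline{\Sigma}^{\wedge}_{m,r}}(0)$. You list three methods by which it might be computed (Gonz\'alez-Sprinberg--Verdier on the Nash blow-up, domination of the Nash transform by the Grassmann resolution $\widetilde{\Sigma}^{\wedge}_{m,r}$, or a L\^e--Teissier-type recursion in $k$), but none is carried out: the Nash fiber over $0$ and the Nash bundle are not identified, the relation between the Nash transform and the Grassmann resolution is asserted rather than established, and the recursion is never written down---so the claim that its ``unique solution with initial value $1$'' is $\binom{\lfloor r/2\rfloor+k}{\lfloor r/2\rfloor}$ is circular, since nothing in the proposal singles out this value over any other. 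Note the answer is genuinely delicate: it depends on $r$ only through $\lfloor r/2\rfloor$, a parity phenomenon that none of your reductions explains and that any honest computation must produce. The paper itself does not prove this step either; it is exactly the content deferred to \cite[Thm 9.11]{P}, with the $C_*$ step (which you do correctly) being the only argument the paper supplies. So your proposal is a correct framing that isolates the hard step, but as a proof it is incomplete precisely at that step.
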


The authors do not know the local Euler obstructions for the orbit closures in $\SCn$.

\subsection{K theory generalization: motivic Chern classes} There is a natural generalization of the cohomological notion of CSM class to K theory, called {\em motivic Chern class}. It was defined in \cite{BSY} and the equivariant version is set up in \cite{FRW2}. 

The equivariant motivic Chern class $\mC(\Sigma)$ of an invariant subvariety $\Sigma\subset M$ of the smooth ambient variety $M$ lives in $K_G(M)[y]$. The $y=1$ specialization recovers the K theory fundamental class. It is convenient to consider its Segre version, the motivic Segre class $\mS(\Sigma)=\mC(\Sigma)/c(TM)$ where $c(TM)$ is the K theoretic total Chern class. Hence, $\mC$ and $\mS$ of the orbits of $\LCn$ and $\SCn$ are elements of (a completion of)
\[
K_{\GL_n(\C)}(\text{pt})=\Z[\alpha_1^{\pm 1},\ldots,\alpha_n^{\pm 1}]^{S_n}[y],
\]
where $\alpha_i$ are the K theory Chern roots $\GL_n(\C)$, ie. their sum is the tautological $n$-bundle over $B\!\GL_n(\C)$.

The traditional approach to study motivic Chern classes is through resolutions and a property similar to \eqref{csm property 3} (with the notion of Euler characteristic replaced with the notion of chi-y-genus). Our construction in Section \ref{fibred} fits that approach, and hence arguments analogous to those in Section \ref{sec:sieveLambda} can be carried out to obtain a sieve formula for the motivic Segre class $\mS$ of the orbits of $\LCn$, $\SCn$. 

\begin{theorem} {\rm \cite[Cor. 10.15]{P}} \label{Kth}
Let $0\leq r\leq n$, $n-r$ even, and $q=-h$. We have
\[
\mS(\Sigma^\wedge_{n,r}) = \sum_{k=0}^{\frac{n-r}{2}} \binom{r+2k}{r}_q E_{2k}(q) \Phi^\wedge_{n,r+2k},
\]
where 
\[
\Phi^{\wedge}_{n,r} = \sum_{\substack{ I \subset [n] \\ \mid I \mid = r }} \left( \prod_{i<j \in I}\frac{1-\frac{1}{\alpha_i\alpha_j}}{1+\frac{y}{\alpha_i\alpha_j}} \prod_{i \in I} \prod_{j \in \bar{I}} \frac{\left(1-\frac{1}{\alpha_i\alpha_j}\right)\left(1+\frac{y\alpha_j}{\alpha_i}\right)}{\left(1+\frac{y}{\alpha_i\alpha_j}\right)\left(1-\frac{\alpha_j}{\alpha_i}\right)} \right),
\]
\[
\binom{n}{m}_q = \frac{[n]_q!}{[m]_q![n-m]_q!}, \qquad\qquad [n]_q! = [1]_q[2]_q \dots [n]_q, \qquad\qquad [0]_q! = 1,
\]
and the $q$-Euler numbers $E_n(q)$ are defined by 
\[
\frac{1}{\cosh_q(t)} = \sum_{n=0}^{\infty}\frac{E_n(q)}{[n]_q!}\cdot t^n, \qquad \cosh_q(t) = \sum_{i=0}^{\infty}\frac{t^{2n}}{[2n]_q!}.
\]
\end{theorem}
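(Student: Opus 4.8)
The plan is to mirror the cohomological argument of Section \ref{sec:sieveform} step by step, replacing Euler characteristic by $\chi_y$-genus throughout, and to track how each cohomological identity deforms into its $q$-analogue. First I would establish the $K$-theoretic analogue of Proposition \ref{Proposition Phi = sum SSM}. The fibered resolution $\eta:\widetilde{\Sigma}^\wedge_{n,r}\to\overline{\Sigma}^\wedge_{n,r}$ is the same geometric object as before; the fiber over a point of $\Sigma^\wedge_{n,k}$ is $\Gr_r(\C^{k*})$. In the motivic setting the multiplicity with which an orbit appears in the pushforward $\eta_*$ is governed not by $\chi(\Gr_r(\C^{k*}))=\binom{k}{r}$ but by its $\chi_y$-genus, and the $\chi_y$-genus of the complex Grassmannian $\Gr_r(\C^k)$ is precisely the Gaussian binomial coefficient $\binom{k}{r}_q$ with $q=-y$ (equivalently $q=-h$). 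This is the one place where the combinatorics changes, and it is exactly what produces the $q$-binomials in the statement. Hence the motivic pushforward yields
\[
\mS(\Sigma^\wedge_{n,r})=\sum_{i=0}^{\frac{n-r}{2}}\binom{r+2i}{r}_q\,\Phi^\wedge_{n,r+2i}\quad\text{after matrix inversion,}
\]
paralleling the first line of Proposition \ref{Proposition Phi = sum SSM}.

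Second, I would recompute the $\Phi$-class in $K$-theory. The class $\Phi^\wedge_{n,r}$ is again an integral over $\Gr_r(\C^{n*})$ of the appropriate Euler-class/total-Chern-class combination built from $\nu=\Lambda^2(S^*)\oplus(S^*\otimes Q^*)$, but now one uses $K$-theoretic Euler and total $\lambda_y$-classes rather than their cohomological counterparts. The effect on each factor is the standard additive-to-multiplicative passage: a cohomological factor $(\alpha_i+\alpha_j)/(1+\alpha_i+\alpha_j)$ becomes $(1-\tfrac{1}{\alpha_i\alpha_j})/(1+\tfrac{y}{\alpha_i\alpha_j})$, and similarly for the mixed factor, giving exactly the displayed $K$-theoretic $\Phi^\wedge_{n,r}$. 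I would verify these substitutions factor-by-factor using the definitions of the $K$-theoretic Euler class and $\lambda_y$-class of the bundles $S$ and $Q$, then apply $K$-theoretic equivariant localization on $\Gr_r(\C^{n*})$, exactly as in the proof of Proposition \ref{Proposition Phi formula}.

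Third comes the matrix inversion. I would need the $q$-analogue of Proposition \ref{Proposition Inverse matrix}: that the inverse of the lower-triangular matrix $\bigl(\binom{2j}{2i}_q\bigr)$ is $\bigl(\binom{2j}{2i}_q E_{2j-2i}(q)\bigr)$, where $E_n(q)$ are the $q$-Euler numbers of the statement. This reduces, as in the classical case, to the generating-function identity $\cosh_q(t)\cdot\bigl(\sum_n E_{2n}(q)t^{2n}/[2n]_q!\bigr)=1$, combined with the $q$-Vandermonde-type identity
\[
\sum_{k=i}^{j}\binom{2j-2i}{2j-2k}_q E_{2j-2k}(q)=\delta_{ij},
\]
which follows directly from the definition of $E_n(q)$ via $1/\cosh_q(t)$. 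The classical proof used the Cauchy-product of the two power series $\sum t^{2n}/(2n)!$ and $\sum E_{2n}t^{2n}/(2n)!$; the $q$-deformed version is the same computation with factorials replaced by $q$-factorials, provided the $q$-binomial addition behaves correctly inside the sum. Assembling these three ingredients gives the stated formula.

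The main obstacle I anticipate is the second step, namely certifying that the $\chi_y$-genus of the Grassmannian fiber enters the pushforward formula as the Gaussian binomial $\binom{k}{r}_q$ with the correct normalization of $q=-h$, and that the $K$-theoretic functoriality property (the motivic analogue of property \eqref{csm property 3}, with $\chi$ replaced by $\chi_y$) applies verbatim to the map $\eta$. This requires that $\eta$ be suitably proper and that the fibers over each orbit be genuinely Zariski-locally-trivial Grassmannian bundles, so that the motivic pushforward genuinely multiplies the class of each orbit by the $\chi_y$-genus of its fiber. Once the functoriality and the fiber-genus computation are pinned down, the remaining algebra---the factor-by-factor $K$-theoretic rewriting of $\Phi^\wedge$ and the $q$-deformed matrix inversion---is routine. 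I would therefore devote most care to the precise statement and applicability of the motivic functoriality property, citing \cite{FRW2} and \cite{BSY} for the foundational facts.
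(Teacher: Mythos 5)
Your proposal follows exactly the route the paper indicates for this theorem (which it quotes from \cite[Cor.~10.15]{P}): run the fibered-resolution sieve argument of Section~\ref{sec:sieveLambda} with the $\chi_y$-genus in place of the Euler characteristic, so that $\chi_y(\Gr_r(\C^k))=\binom{k}{r}_q$ replaces $\binom{k}{r}$ in the pushforward, compute the $K$-theoretic $\Phi$-classes by equivariant localization, and invert the $q$-binomial matrix via the $q$-analogue of Proposition~\ref{Proposition Inverse matrix}. One expositional slip worth fixing: the display in your first step should be the pre-inversion identity $\Phi^\wedge_{n,r}=\sum_{i}\binom{r+2i}{r}_q\,\mS(\Sigma^\wedge_{n,r+2i})$; the $q$-Euler factors $E_{2k}(q)$ appear only after the matrix inversion you carry out in your third step.
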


The $y=1$ specialization recovers the K theory fundamental classes (for more works on those classes see \cite{And}). Yet, Theorem~\ref{Kth} is just a (rather complicated) sieve formula. The desired formula would be analogous to the interpolation formula Theorem \ref{localization thm} for CSM classes. Although interpolation characterization of motivic Chern classes exist \cite[Section 5.2]{FRW2}, the solution of those interpolation constraints (involving Newton polytopes of specializations) is highly non-trivial, and hence is subject to future study. Initial results and conjectures are in \cite{P}.

%\section{K theory generalizations: motivic Chern classes}

%\subsection{Sieve formula}

%\subsection{Conjectured interpolation formulas}

\end{document}